\documentclass[11pt,a4paper,leqno]{amsart}
\usepackage{amssymb, amsmath}
%\usepackage{amssymb, amsmath, amscd}
%\usepackage[notref,notcite]{showkeys}
%\usepackage[pagewise]{lineno}\linenumbers
%\usepackage[all]{xy}
%\usepackage{color}
%\def\red{\color{red}} 
%\def\blue{\color{blue}}
%\def\black{\color{black}}
%%%%%%%%%%%%%%%%%%%%%%%%%%%%%%%%%%%%%%%%%%%%%%%%%%%
\newtheorem{theorem}{Theorem}[section]

\newtheorem{proposition}[theorem]{Proposition}
\newtheorem{remark}[theorem]{Remark}

\newcommand{\dt}{\mathfrak{d}}
\newcommand{\tr}{\mathfrak{t}}
\newcommand{\bach}{\mathfrak{B}}
\newcommand{\ce}{\mathcal{E}}

\newcommand{\affcon}{D} % affine connection
 % affine manifol

\begin{document}
\title[Bach-flat isotropic gradient Ricci solitons]
{Bach-flat isotropic gradient Ricci solitons}
\author[Calvi\~{n}o-Louzao, Garc\'ia-R\'io, Guti\'errez-Rodr\'\i guez, V\'{a}zquez-Lorenzo]{E. Calvi\~{n}o-Louzao, E. Garc\'ia-R\'io, I. Guti\'errez-Rodr\'\i guez,   \\
R. V\'{a}zquez-Lorenzo}
\address{(E. C.-L.) Conseller\'\i a de Cultura, Educaci\'on e Ordenaci\'on Universitaria, Edificio Administrativo San Caetano, 15781 Santiago de Compostela, Spain}
\address{(E. G.-R.) Faculty of Mathematics,
University of Santiago de Compostela, 15782 Santiago de Compostela, Spain}
\address{(I. G.-R.) Faculty of Mathematics,
University of Santiago de Compostela, 15782 Santiago de Compostela, Spain}
\address{(R. V.-L.) Department of Mathematics, IES de Ribadeo Dionisio Gamallo, Ribadeo, Spain}
\email{{estebcl@edu.xunta.es}, {eduardo.garcia.rio@usc.es}, {dzohararte@hotmail.fr},
{ravazlor@edu.xunta.es}}
\subjclass[2010]{53C25, 53C20, 53C44}
\date{}
\keywords{Gradient Ricci soliton, Bach tensor, Riemannian extension, Affine surface}

\begin{abstract}
We construct examples of Bach-flat gradient Ricci solitons which are neither half conformally flat nor conformally Einstein.
\end{abstract}

\maketitle

\section{Introduction}

Let $(M,g)$ be a pseudo-Riemannian manifold. Let $f\in\mathcal{C}^\infty(M)$. We say that $(M,g,f)$ is
a \emph{gradient Ricci soliton} if
the following equation is satisfied:
\begin{equation}\label{eq:ricci-soliton}
\operatorname{Hes}_f+\rho=\lambda\, g\,,
\end{equation}
for some $\lambda\in\mathbb{R}$, where $\rho$ is the \emph{Ricci tensor}, and
$\operatorname{Hes}_f=\nabla df$ is the \emph{Hessian tensor} acting on $f$.

A gradient Ricci soliton is said to be {\it trivial} if the potential function $f$ is constant, since equation \eqref{eq:ricci-soliton} reduces to the Einstein equation $\rho=\lambda g$. Besides being a generalization of Einstein manifolds, the main interest of gradient Ricci solitons comes from the fact that they correspond to self-similar solutions of the {\it Ricci flow} $\,\partial_tg(t)=-2\rho_{g(t)}$.
Ricci solitons are 
 ancient solutions of the flow in the shrinking case ($\lambda>0$), eternal solutions in the steady case ($\lambda=0$), and immortal solutions in the expanding case ($\lambda<0$).
Gradient Ricci solitons have been extensively
investigated in the literature (see for example the discussion in
\cite{Cao} and the references therein). 
Classifying gradient Ricci solitons under geometric conditions is a problem of special interest.

The gradient Ricci soliton equation codifies geometric information of $(M,g)$ in terms of the Ricci curvature and the second fundamental form of the level sets of the potential function $f$. 
The fact that the Ricci tensor completely determines the curvature tensor in the locally conformally flat case has yielded some results in this situation \cite{CC2, MS, PW}.
Any locally conformally flat gradient Ricci soliton is locally a warped product in the Riemannian setting \cite{FLGR}. The higher signature case, however, allows other possibilities when the level sets of the potential function are degenerate hypersurfaces \cite{BGG}.
Generalizing the locally conformally flat condition, four-dimensional half conformally flat (i.e., self-dual or anti-self-dual) gradient Ricci solitons  have been investigated in the Riemannian and neutral signature cases \cite{BVGR, chen-wang}. While they are locally conformally flat in the Riemannian situation, neutral signature allows other examples given by Riemannian extensions of affine gradient Ricci solitons.

Let $W$ be the Weyl conformal curvature tensor of $(M,g)$.
The Bach tensor,
$
\bach_{ij}=\nabla^k\nabla^\ell W_{kij\ell}+\frac{1}{2}\rho^{k\ell}W_{kij\ell}\,
$,
is conformally invariant in dimension four. 
Bach-flat metrics contain half conformally flat and conformally Einstein metrics as special cases \cite{Besse}.
Hence, a natural problem is to classify Bach-flat gradient Ricci solitons. The Riemannian case was investigated in \cite{GRS1,GRS2} both in the shrinking and steady cases. In all situations the Bach-flat condition reduces to the locally conformally flat one under some natural conditions.

Our main purpose in this paper is to construct new examples of Bach-flat gradient Ricci solitons. The corresponding potential functions have degenerate level set hypersurfaces and their underlying structure is never locally conformally flat, in sharp contrast with the Riemannian situation. 
These metrics are realized on the cotangent bundle $T^*\Sigma$ of an affine surface $(\Sigma,\affcon)$, and they may be viewed as perturbations of the classical Riemannian extensions introduced by Patterson and Walker in \cite{Patterson-Walker52}.

Here is a brief guide to some of the most important results of this paper. 
In Theorem \ref{th:Bach-zero} we show that, for any affine surface $(\Sigma,\affcon)$ admitting a parallel nilpotent $(1,1)$-tensor field $T$, the modified Riemannian extension $(T^*\Sigma,g_{\affcon,T,\Phi})$ is Bach-flat. Moreover we show that  Bach-flatness is independent of the deformation tensor field $\Phi$, thus providing an infinite family of Bach-flat metrics for any initial data $(\Sigma,\affcon,T)$.
Affine surfaces admitting a parallel nilpotent $(1,1)$-tensor field $T$ are characterized in Proposition \ref{prop:1} by the recurrence of the symmetric part of the Ricci tensor, being
$\operatorname{ker}T$ a parallel one-dimensional distribution whose integral curves are geodesics. This class of surfaces generalizes those considered in \cite{Opozda}.

The previous construction is used in Theorem \ref{th:solitons} to show that, for any smooth function $h\in\mathcal{C}^\infty(\Sigma)$, there exist appropriate deformation tensor fields $\Phi$ so that $(T^*\Sigma,g_{\affcon,T,\Phi},f=h\circ\pi)$ is a steady gradient Ricci soliton if and only if $dh(\operatorname{ker}T)=0$. This provides infinitely many examples of Bach-flat gradient Ricci solitons in neutral signature. 

Theorem \ref{th:sd-asd} and Theorem \ref{th:conformally-Einstein} show that $(T^*\Sigma,g_{\affcon,T,\Phi})$ is generically strictly Bach-flat, i.e., neither half conformally flat nor conformally Einstein. Moreover, Theorem \ref{th:sd-asd} is used in Proposition \ref{prop:4} to construct new examples of anti-self-dual metrics. Turning to gradient Ricci solitons, we show in Theorem \ref{prop:11} the existence of anti-self-dual steady gradient Ricci solitons which are not locally conformally flat.

The paper is organized as follows. Some basic results on the Bach tensor and gradient Ricci solitons are introduced in Section \ref{preliminaries}, as well as a sketch of the construction of modified Riemannian extensions $g_{\affcon,\Phi,T}$. We use these metrics in Section \ref{se:Bach-flat} to show that, for any parallel tensor field $T$ on $(\Sigma,\affcon)$,  $g_{\affcon,\Phi,T}$ is Bach-flat if and only if $T$ is either a multiple of the identity or nilpotent
(cf. Theorem \ref{th:Bach-zero}).
In Section \ref{se:4} we show that for each initial data $(\Sigma,\affcon,T)$ there are an infinite number of Bach-flat steady gradient Ricci solitons (cf. Theorem \ref{th:solitons}). Non-triviality of the examples is obtained after an examination of the half conformally flat condition (cf. Section~\ref{se:W-half}) and the conformally Einstein property (cf. Section~\ref{se:CE}) of the modified Riemannian extensions introduced in Section~\ref{preliminaries}. As a consequence, new anti-self-dual gradient Ricci solitons are exhibited in Theorem \ref{prop:11}. Finally, we specialize this construction in Section \ref{re:flat-11} to provide some illustrative examples.

\section{Preliminaries}\label{preliminaries}
Let $(M^n,g)$ be a pseudo-Riemannian manifold with Ricci curvature $\rho$ and scalar curvature $\tau$. 
Let $W$ denote the Weyl conformal curvature tensor and define
$W[\rho](X,Y)=\sum_{ij}\varepsilon_i\varepsilon_j W(E_i,X,Y,E_j)\rho(E_i,E_j)$, where $\{ E_i\}$ is a local orthonormal frame and $\varepsilon_i=g(E_i,E_i)$. Then the \emph{Bach tensor} is defined by (see \cite{Bach})
\begin{equation}
\label{eq:Bach-tensor}
\bach=\operatorname{div}_1\operatorname{div}_4 W+\frac{n-3}{n-2} W[\rho]\,,
\end{equation}
where $\operatorname{div}$ is the divergence operator.

Let $\mathfrak{S}=\rho -\frac{\tau}{2(n-1)}\, g$ denote the Schouten tensor of $(M,g)$. Let the \emph{Cotton tensor}, $\displaystyle\mathfrak{C}_{ijk}=(\nabla_i \mathfrak{S})_{jk} - (\nabla_j \mathfrak{S})_{ik}$\,; it provides a measure of the lack of symmetry on the covariant derivative of the  Schouten tensor. Since $\operatorname{div}_4 W=-\frac{n-3}{n-2}\,\mathfrak{C}$, the  Bach and the Cotton tensors of any four-dimensional manifold are related by
$\displaystyle
\bach$  $=$ $\frac{1}{2}\left(-\operatorname{div}_1\mathfrak{C}+W[\rho]\right)\,$.

The Bach tensor, which is trace-free and conformally invariant in dimension $n=4$,  has been broadly investigated in the literature, both from the geometrical and physical viewpoints (see for example \cite{CH, Derd, DuT} and references therein). It is the gradient of the $L^2$ functional of the Weyl curvature on compact manifolds. 
The field equations of conformal gravity are equivalent to setting the Bach tensor equal to zero
and it is also central in the study of the Bach flow, a geometric flow which is quadratic on the curvature and whose fixed points are the vacuum solutions of conformal Weyl gravity~\cite{BBLP}.

Besides the half conformally flat metrics and the conformally Einstein ones, there are few known examples of strictly Bach-flat manifolds, meaning the ones which are neither half conformally flat nor conformally Einstein (see, for example, \cite{AGS, HN,LN}).  
Motivated by this lack of examples, we first construct new explicit four-dimensional Bach-flat manifolds of neutral signature.

\subsection{Riemannian extensions}\label{sse:RE}
In order to introduce the family of metrics under consideration, we recall that a  pseudo-Riemannian manifold $(M,g)$ is a \emph{Walker manifold} if there exists a parallel null distribution $\mathcal{D}$ on $M$. 
Walker metrics, also called Brinkmann waves in the literature, have been widely investigated in the Lorentzian setting (pp-waves being a special class among them). They appear in many geometrical situations showing a specific behaviour without Riemannian counterpart (see \cite{walker}).

Let $(M,g,\mathcal{D})$ be a four-dimensional Walker manifold of neutral signature and
$\mathcal{D}$ of maximal rank. Then there are local
coordinates $(x^1,x^{2}, x_{1^\prime},x_{2^\prime})$ so that the metric $g$ is given by (see \cite{W-50})
\begin{equation}\label{eqn-2.a}
g= 2\, dx^i\circ dx_{i^\prime}+g_{ij}\, dx^i\circ dx^j\,,
\end{equation}
where ``$\circ$" denotes the symmetric
product 
$\omega_1\circ\omega_2:=\textstyle\frac12(\omega_1\otimes\omega_2+\omega_2\otimes\omega_1)$ and
$(g_{ij})$ is a $2\times 2$ symmetric matrix whose entries are functions of all the variables. Moreover, the parallel degenerate distribution is given by
$\mathcal{D}=\operatorname{span}\{\partial_{x_{1^\prime}},\partial_{x_{2^\prime}}\}$.

A special family of four-dimensional Walker metrics is provided by the Riemannian extensions of affine connections to the cotangent bundle of an affine surface. 
Next we briefly sketch their construction.
Let $T^*\Sigma$ be the cotangent bundle of a surface $\Sigma$
and let $\pi\colon T^*\Sigma\rightarrow\Sigma$ be the projection. 
Let $\tilde{p}=(p,\omega)$ denote a point of $T^*\Sigma$, where $p\in \Sigma$ and $\omega\in T_p^*\Sigma$.
Local coordinates $(x^i)$ in an open set $\mathcal{U}$ of $\Sigma$ induce local
coordinates $(x^i,x_{i^\prime})$ in $\pi^{-1}(\mathcal{U})$, where one sets
$\omega=\sum x_{i^\prime}dx^i$.
The evaluation functions on $T^*\Sigma$ play a central role in the construction. They are defined as follows.
For each vector field $X$ on $\Sigma$, \emph{the evaluation of $X$} is the real valued function $\iota
X\colon T^*\Sigma\rightarrow\mathbb{R}$  given by
$\iota X(p,\omega)=\omega(X_p)$. 
%Using induced coordinates on $T^*M$, one has
%$\iota X(x^i,x_{i'})=\sum x_{i'}X^i$, where $X=X^j\partial_j$.
Vector fields on $T^*\Sigma$ are characterized by their action on evaluations $\iota X$ and one defines 
the complete lift to $T^*\Sigma$ of a vector field $X$ on $\Sigma$ by $X^C(\iota Z)=\iota [X,Z]$,
for all vector fields $Z$ on $\Sigma$.
Moreover, a $(0,s)$-tensor field on $T^*\Sigma$ is characterized by its action on complete lifts of vector fields on $\Sigma$.

Next, let $\affcon$ be a torsion free affine connection on $\Sigma$. The \emph{Riemannian extension} $g_\affcon$ is
the neutral signature metric $g_\affcon$ on $T^*\Sigma$ characterized by the identity
$g_\affcon(X^C,Y^C)=-\iota(\affcon_XY+\affcon_YX)$ (see \cite{Patterson-Walker52}).
They are expressed in the induced local coordinates $(x^i,x_{i^\prime})$ as follows
\begin{equation}
\label{eq:Riemannian-extension}
g_\affcon= 2\, dx^i\circ dx_{i'}-2x_{k'}{}^\affcon\Gamma_{ij}{}^kdx^i\circ dx^j\,,
\end{equation}
where ${}^\affcon\Gamma_{ij}{}^k$ denote the Christoffel symbols of $\affcon$.
The geometry of $(T^*\Sigma,g_\affcon)$ is strongly related to that of $(\Sigma,\affcon)$. Recall that the curvature of any affine surface is completely determined by its Ricci tensor $\rho^\affcon$. Moreover, the symmetric and skew-symmetric parts 
given by 
$
\rho^\affcon_{sym}(X,Y)=\frac{1}{2}\left\{\rho^\affcon(X,Y)\right.$ $+$ $\left.\rho^\affcon(Y,X)\right\}$, and
$
\rho^\affcon_{sk}(X,Y)=\frac{1}{2}\left\{\rho^\affcon(X,Y)-\rho^\affcon(Y,X)\right\}$
play a distinguished role.

Let $\Phi$ be a symmetric $(0,2)$-tensor field on $\Sigma$. Then the \emph{deformed Riemannian extension}, $g_{\affcon,\Phi}=g_\affcon+\pi^*\Phi$, is a first perturbation of the Riemannian extension. A second one is as follows.
Let $T=T_i^k dx^i\otimes\partial_{x^k}$ be a $(1,1)$-tensor field on $\Sigma$. Its evaluation $\iota T$ defines a one-form on $T^*\Sigma$  characterized by $\iota T(X^C)=\iota(TX)$. 
The \emph{modified Riemannian extension} $g_{\affcon,\Phi,T}$ is the neutral signature metric on
$T^*\Sigma$ defined by (see \cite{CL-GR-G-VL-PRSA-2009})
\begin{equation}\label{eq:Riemannian-extension-2}
g_{\affcon,\Phi,T}=\iota T\circ\iota T + g_\affcon+\pi^*\Phi\,,
\end{equation}
where $\Phi$ is a symmetric $(0,2)$-tensor field on $\Sigma$.
In local coordinates one has
\[
g_{\affcon,\Phi,T}\!=\! 2\, dx^i\circ dx_{i'}\!+\!\{  \frac{1}{2} x_{r'} x_{s'} (T_i^r T_j^s + T_j^r T_i^s)
 \! -\!2x_{k'}{}^\affcon\Gamma_{ij}{}^k\!+\! \Phi_{ij}\} dx^i\circ dx^j.
\]
The case when $T$ is a multiple of the identity ($T=c\operatorname{Id},\,c\neq 0$) is of special interest. It was shown in \cite{CL-GR-G-VL-PRSA-2009} that for any affine surface $(\Sigma,\affcon)$, the modified Riemannian extension $g_{\affcon,\Phi,c\operatorname{Id}}$ is an Einstein metric on $T^*\Sigma$ if and only if the deformation tensor $\Phi$ is the symmetric part of the Ricci tensor of $(\Sigma,\affcon)$.
Moreover, a slight generalization of the modified Riemannian extension allows a complete description of self-dual Walker metrics as follows.

\begin{theorem}{\rm\cite{CL-GR-G-VL-PRSA-2009, DR-GR-VL-06}}\label{thm-7.2}
A four-dimensional Walker metric   is self-dual if and only
if it is locally isometric to the cotangent bundle $T^*\Sigma$ of an affine
surface $(\Sigma,\affcon)$, with metric tensor
$$
g=\iota X(\iota \operatorname{id}\circ\iota \operatorname{id})+ \iota \operatorname{id}\circ\iota
T +g_{\affcon}+\pi^*\Phi\,
$$
where $X$, $T$, $\affcon$ and $\Phi$ are a vector field, a
$(1,1)$-tensor field, a torsion free affine connection and a
symmetric $(0,2)$-tensor field on $\Sigma$, respectively.
\end{theorem}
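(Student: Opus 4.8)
\section*{Proof proposal}

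The plan is to realize self-duality of a four-dimensional neutral Walker metric as the vanishing of the anti-self-dual part $W^-$ of the Weyl tensor, working throughout in the adapted Walker coordinates of \eqref{eqn-2.a}, where $g=2\,dx^i\circ dx_{i'}+g_{ij}\,dx^i\circ dx^j$ and the only unknowns are the functions $g_{ij}(x^1,x^2,x_{1'},x_{2'})$. Since the neutral Hodge operator satisfies $*^2=\operatorname{id}$ on $2$-forms in signature $(2,2)$, one has a genuine splitting $\Lambda^2=\Lambda^+\oplus\Lambda^-$ and self-duality is the condition $W^-=0$, a system of second-order PDEs for the $g_{ij}$. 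The two implications are then handled separately.

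For the ``if'' direction I would substitute the stated metric into $W^-$ and check it vanishes. The building blocks enter the fibre coordinates with controlled degree: $\pi^*\Phi$ is fibre-constant, the Riemannian extension $g_\affcon$ is fibre-linear, $\iota\operatorname{id}\circ\iota T$ is fibre-quadratic, and $\iota X(\iota\operatorname{id}\circ\iota\operatorname{id})$ is fibre-cubic, so $g_{ij}$ is a polynomial of degree at most three in $(x_{1'},x_{2'})$. Using that the pure Riemannian extension $g_\affcon$ is already self-dual and that the curvature of a Walker metric is a polynomial expression in the $g_{ij}$ and their derivatives, the remaining contributions of the quadratic, cubic and $\Phi$-terms to $W^-$ reduce to a finite verification in terms of the affine data $(X,T,\affcon,\Phi)$ on $\Sigma$.

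The substance is the ``only if'' direction. Starting from $W^-=0$ for a general Walker metric, the first step is to analyse the fibre dependence of the equations. Because the Christoffel symbols are of the form $\affcon\sim g^{-1}\partial g$ and the inverse Walker metric has the block structure forced by \eqref{eqn-2.a}, the components of $W^-$ are polynomial in the $g_{ij}$ and their first and second derivatives. The key step is to isolate the terms of highest order in $(x_{1'},x_{2'})$ in the equations $W^-=0$ and argue that they constrain the pure fibre Hessian $\partial_{x_{i'}}\partial_{x_{j'}}g_{kl}$ to be at most fibre-linear; since each pure fibre derivative lowers the polynomial degree by one, this forces $g_{ij}$ to be a polynomial of degree at most three in the fibre variables, recovering the ansatz of \eqref{eq:Riemannian-extension-2} enlarged by a cubic term. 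One then reads off the coefficients: the cubic part yields a vector field $X$, the quadratic part a $(1,1)$-tensor $T$, the fibre-linear part the Christoffel symbols of a torsion-free connection $\affcon$, and the fibre-constant part a symmetric $(0,2)$-tensor $\Phi$; a final check shows the residual self-duality equations are automatically satisfied and that these coefficients transform as genuine tensor fields under change of base coordinates.

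The main obstacle is the polynomial-termination argument: one must show that any fibre dependence of order higher than cubic is incompatible with $W^-=0$, which requires tracking precisely how each fibre derivative reduces degree inside the nonlinear curvature expression and identifying the top-degree obstruction that cannot be cancelled. A secondary difficulty is the tensorial bookkeeping, namely verifying that the arrays extracted in one chart assemble into well-defined objects $X$, $T$, $\Phi$ and a connection $\affcon$ on $\Sigma$, and exploiting the residual coordinate freedom preserving the Walker form to normalise the decomposition.
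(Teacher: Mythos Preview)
The paper does not prove this theorem: it is quoted from \cite{CL-GR-G-VL-PRSA-2009, DR-GR-VL-06} in the preliminaries and used as a black box thereafter. There is therefore no in-paper argument to compare your proposal against. What the paper \emph{does} import from those references, in the proof of Theorem~\ref{th:sd-asd}, are the explicit matrix expressions for $W^{\pm}$ of a general Walker metric in adapted coordinates (equations \eqref{W+- matrix form}--\eqref{W+ 12} and \eqref{Weyl-self-dual-nilpotent}); this is exactly the computational engine your outline relies on, so your strategy is aligned with the apparent method of the cited sources.

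Two remarks on the proposal itself. First, in the ``if'' direction you write that, since $g_\affcon$ is already self-dual, the remaining contributions to $W^-$ from the $\Phi$-, $T$-, and $X$-terms ``reduce to a finite verification.'' Curvature is not linear in the metric, so you cannot decompose $W^-$ additively into a $g_\affcon$-piece plus corrections; the check is a single substitution of the full cubic-in-fibre metric into $W^-$, not a superposition argument. Second, in the ``only if'' direction your polynomial-termination step is phrased as if $g_{ij}$ were already polynomial in the fibre variables and one only needs to bound its degree. A priori the $g_{ij}$ are arbitrary smooth functions of $(x^1,x^2,x_{1'},x_{2'})$; the correct mechanism is to differentiate the scalar equations $W^-=0$ repeatedly in the fibre directions and show that sufficiently high pure fibre derivatives of the $g_{ij}$ vanish identically, from which polynomiality (and then the degree bound) follows. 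You flag this as the main obstacle, which is accurate, but the sketch as written presupposes the conclusion.
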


As a matter of notation, we will write $\partial_k=\frac{\partial}{\partial x^k}$ and 
$\partial_{k'}=\frac{\partial}{\partial x_{k'}}$, unless we want to emphasize some special coordinates. We will let $\phi_k=\frac{\partial}{\partial x^k}\phi$ and 
$\phi_{k'}=\frac{\partial}{\partial x_{k'}}\phi$ to denote the corresponding first derivatives of a smooth function $\phi$.

\subsection{Gradient Ricci solitons and affine gradient Ricci solitons}\label{sse:GRS}
Let $(M,g,f)$ be a gradient Ricci soliton, i.e., $(M,g)$ is a pseudo-Riemannian manifold and $f\in\mathcal{C}^\infty(M)$ is a solution of equation \eqref{eq:ricci-soliton} for some $\lambda\in\mathbb{R}$. The level set hypersurfaces of the potential function play a distinguished role in analyzing the geometry of gradient Ricci solitons. Hence we say that the soliton is \emph{non isotropic} if $\nabla f$ is nowhere lightlike (i.e., $\|\nabla f\|^2\neq 0$), and that the soliton is \emph{isotropic} if $\|\nabla f\|^2=0$, but $\nabla f\neq 0$.

Non isotropic gradient Ricci solitons lead to local warped product decompositions in the locally conformally flat and half conformally flat cases, and their geometry resembles the Riemannian situation \cite{BVGR, BGG}. The isotropic case is, however,  in sharp contrast with the positive definite setting since $\nabla f$ gives rise to a Walker structure. Self-dual gradient Ricci solitons which are not locally conformally flat are isotropic and, moreover,  they are described in terms of  Riemannian extensions as follows.

\begin{theorem}{\rm \cite{BVGR}}\label{th:BVGR1}
Let $(M,g,f)$ be a four-dimensional self-dual gradient Ricci soliton of neutral signature which is not locally conformally flat. Then $(M,g)$ is locally isometric to the cotangent bundle $T^*\Sigma$ of an affine surface $(\Sigma,D)$ equipped with a modified Riemannian extension $g_{D,\Phi,0}$.

Moreover any such gradient Ricci soliton is steady and the potential function is given by $f=h\circ\pi$ for some $h\in\mathcal{C}^\infty(\Sigma)$ satisfying the affine gradient Ricci soliton equation 
\begin{equation}
\label{eq:AGRS}
\operatorname{Hes}^\affcon_h+2\rho^\affcon_{sym}=0\,,
\end{equation}
for any symmetric $(0,2)$-tensor field $\Phi$ on $\Sigma$.
\end{theorem}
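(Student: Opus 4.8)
The plan is to locate the causal type of $\nabla f$, recognise the Walker structure that an isotropic soliton carries, feed it into the self-dual classification of Theorem~\ref{thm-7.2}, and finally read off the equation satisfied by the descended potential.

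First I would establish that the soliton is isotropic. If $\nabla f$ were nowhere null, the level sets of $f$ would be non-degenerate hypersurfaces and $f$ would induce a local warped-product decomposition over a one-dimensional base; imposing half conformal flatness on such a warped product forces the fibre factor to be of constant curvature, so $(M,g)$ would be locally conformally flat, contrary to hypothesis. This is precisely the assertion, recalled just before the statement, that non-locally-conformally-flat self-dual solitons are isotropic, so we may assume $\|\nabla f\|^2=0$ and $\nabla f\neq 0$.

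Next I would extract the Walker structure. Since $\|\nabla f\|^2=0$ is constant, differentiating and using \eqref{eq:ricci-soliton} gives $\operatorname{Hes}_f(X,\nabla f)=0$, equivalently $\rho(X,\nabla f)=\lambda\,df(X)$ for all $X$; substituting this back into \eqref{eq:ricci-soliton} yields $\nabla_{\nabla f}\nabla f=0$, so $\nabla f$ is a null geodesic vector field. The essential point is then to produce a parallel degenerate two-plane $\mathcal{D}$ containing $\nabla f$: self-duality singles out, among the null planes through $\nabla f$, one that is parallel, thereby endowing $(M,g)$ with a neutral-signature Walker structure with $\nabla f\in\mathcal{D}$. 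Applying Theorem~\ref{thm-7.2} then realises $(M,g)$ locally as $T^*\Sigma$ with a metric of the generalised form $\iota X(\iota\operatorname{id}\circ\iota\operatorname{id})+\iota\operatorname{id}\circ\iota T+g_\affcon+\pi^*\Phi$.

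It then remains to impose the soliton equation. Because $\nabla f\in\mathcal{D}=\operatorname{span}\{\partial_{1'},\partial_{2'}\}$ in Walker coordinates, the base components of $\nabla f$ vanish, i.e.\ $f_{i'}=0$, so $f$ descends to the base, $f=h\circ\pi$ with $h\in\mathcal{C}^\infty(\Sigma)$. These extension metrics have vanishing scalar curvature, so the standard soliton identity $\tau+\|\nabla f\|^2-2\lambda f=\mathrm{const}$ forces $\lambda=0$; the soliton is steady. Substituting $f=h\circ\pi$ into \eqref{eq:ricci-soliton} and computing the Hessian of $h\circ\pi$ and the Ricci tensor in the induced coordinates, I would separate the resulting system into its fibre- and base-dependent parts: the fibre part should force the vanishing of the vector field $X$ and of the $(1,1)$-tensor $T$ of Theorem~\ref{thm-7.2}, reducing $g$ to the deformed Riemannian extension $g_{\affcon,\Phi,0}=g_\affcon+\pi^*\Phi$, while the base part should collapse to $\operatorname{Hes}^\affcon_h+2\rho^\affcon_{sym}=0$, namely \eqref{eq:AGRS}, in which $\Phi$ does not appear.

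I expect the main obstacle to be the passage from the null geodesic line $\operatorname{span}\{\nabla f\}$ to the parallel null two-plane $\mathcal{D}$: showing that self-duality makes one of the two null planes through $\nabla f$ parallel requires analysing the action of the (anti-)self-dual Weyl operator on degenerate two-forms, and this is precisely where the half-conformal-flatness hypothesis does its work. By comparison the final step is a bookkeeping computation; its only subtle feature is verifying that the $\pi^*\Phi$ contribution cancels out of the components relevant to \eqref{eq:ricci-soliton}, which is what underlies the ``for any $\Phi$'' in the statement.
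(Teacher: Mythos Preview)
The paper does not prove Theorem~\ref{th:BVGR1}; it is quoted verbatim from \cite{BVGR} in the preliminaries and no argument is supplied here, so there is no in-paper proof to compare against.

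That said, your outline matches the architecture one expects from the original reference: reduce to the isotropic case, extract a Walker structure, invoke the self-dual Walker classification of Theorem~\ref{thm-7.2}, and then let the soliton equation force $X=0$, $T=0$, $\lambda=0$ and produce \eqref{eq:AGRS}. You have also correctly located the genuine difficulty, namely passing from the null geodesic line $\operatorname{span}\{\nabla f\}$ to a \emph{parallel} null two-plane; that step is where self-duality is actually consumed and it is not a formality. One small caution: your derivation of $\lambda=0$ assumes the general self-dual Walker metrics of Theorem~\ref{thm-7.2} already have $\tau=0$, which is not clear before you have killed $X$ and $T$. In practice (and as the present paper does in the analogous computation in the proof of Theorem~\ref{th:solitons}) one reads $\lambda=0$ directly off a mixed coordinate component such as $(\operatorname{Hes}_f+\rho)(\partial_1,\partial_{1'})=\lambda\,g(\partial_1,\partial_{1'})$, rather than via the scalar identity.
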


The previous result relates the affine geometry of $(\Sigma,\affcon)$ and the pseudo-Riemannian geometry of $(T^*\Sigma,g_{\affcon,\Phi,0})$, allowing the construction of an infinite family of steady gradient Ricci solitons on $T^*\Sigma$ for any initial data $(\Sigma,\affcon, h)$ satisfying \eqref{eq:AGRS}.
It is important to remark here that the existence of affine gradient Ricci solitons imposes some restrictions on $(\Sigma, \affcon)$, as shown in \cite{BVGRG} in the locally homogeneous case.

\section{Bach-flat modified Riemannian extensions}\label{se:Bach-flat}

The use of modified Riemannian extensions with $T=c\operatorname{Id}$ allowed the construction of many  examples of self-dual Einstein metrics \cite{CL-GR-G-VL-PRSA-2009}.
One of the crucial facts in understanding the metrics $g_{\affcon,\Phi,c\operatorname{Id}}$ is that the $(1,1)$-tensor field $T=c\operatorname{Id}$ is parallel with respect to the connection $\affcon$. Hence, a natural generalization arises by considering arbitrary tensor fields $T$ which are parallel with respect to the affine connection $\affcon$. 

Let $(\Sigma,\affcon,T)$ be a torsion free affine surface equipped with a parallel $(1,1)$-tensor field $T$. Parallelizability of $T$ guaranties the existence of local coordinates  $(x^1,x^2)$ on $\Sigma$ so that
\[
	T\partial_1 = T^1_1\, \partial_1 + T^2_1\, \partial_2,
	\qquad
	T\partial_2 = T^1_2\,\partial_1 + T^2_2 \,\partial_2,
\]
for some real constants  $T^j_i$. 
Let $(T^*\Sigma,g_{\affcon,\Phi,T})$ be the modified Riemannian extension given by \eqref{eq:Riemannian-extension-2}. Further note that $\affcon$ and $\Phi$ are taken with full generality. Thus, the corresponding Christoffel symbols ${}^D\Gamma_{ij}^k$  and the coefficient functions $\Phi_{ij}$ are arbitrary smooth functions of the coordinates $(x^1,x^2)$.

%Now we state the following characterization of Bach-flat modified Riemannian extensions.
Our first main result concerns the construction of Bach-flat metrics:

\begin{theorem}\label{th:Bach-zero}
Let $(\Sigma,\affcon,T)$ be a torsion free affine surface equipped with a parallel $(1,1)$-tensor field $T$. Let $\Phi$ be an arbitrary symmetric $(0,2)$-tensor field on $\Sigma$. Then the Bach tensor of 
$(T^*\Sigma,g_{\affcon,\Phi,T})$ vanishes if and only if  $T$ is either a multiple of the identity or  nilpotent.
\end{theorem}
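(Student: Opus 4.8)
The plan is to turn the statement into an explicit coordinate computation of $\bach$ and then isolate a purely algebraic condition on the constant matrix $T=(T^j_i)$. Since $T$ is $\affcon$-parallel, I fix the coordinates in which the $T^j_i$ are constant and write the metric \eqref{eq:Riemannian-extension-2} out in full, keeping ${}^\affcon\Gamma_{ij}{}^k$ and $\Phi_{ij}$ as arbitrary functions of $(x^1,x^2)$. From here I run the standard pipeline: Levi--Civita connection, curvature, Ricci and scalar curvature, Weyl tensor, Cotton tensor, and finally $\bach=\frac12(-\operatorname{div}_1\cotton+W[\rho])$. The Walker structure $\mathcal{D}=\operatorname{span}\{\partial_{1'},\partial_{2'}\}$ organizes everything: the curvature is block-triangular with respect to $\mathcal{D}$, the inverse metric is polynomial in the fibre coordinates, and one checks that $\bach(\partial_{i'},\cdot)=0$ identically. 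Consequently the only components that can be nonzero are the base--base components $\bach_{ij}=\bach(\partial_i,\partial_j)$, and each of these is a polynomial in the fibre coordinates $x_{1'},x_{2'}$.

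The decisive structural point is that the coefficients of the top-degree monomials in $x_{1'},x_{2'}$ of $\bach_{ij}$ depend only on $T$ and not on $\affcon$ or $\Phi$, since the highest fibre-degree part of $g_{ij}$ is exactly the $\affcon$- and $\Phi$-free piece $\frac12 x_{r'}x_{s'}(T^r_iT^s_j+T^r_jT^s_i)$. This immediately yields the ``only if'' direction for all $\affcon$ and $\Phi$ at once: if the top-degree coefficients do not vanish, no choice of the lower-order ($\affcon$- and $\Phi$-dependent) terms can cancel them, so $\bach\not\equiv0$. Writing $T_0=T-\tfrac12\operatorname{tr}(T)\operatorname{Id}$ for the trace-free part and using the Cayley--Hamilton identity $T_0^2=\big(\tfrac14\operatorname{tr}(T)^2-\det T\big)\operatorname{Id}$, I expect the vanishing of these top-degree coefficients to be equivalent to the two equations $T_0^2=0$ and $\operatorname{tr}(T)\,T_0=0$. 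These hold precisely when $T_0=0$, i.e. $T=c\operatorname{Id}$, or when $\operatorname{tr}(T)=\det T=0$ with $T\neq0$, i.e. $T$ is nilpotent; the intermediate case $\det T_0=0$ with $\operatorname{tr}(T)\neq0$ (a non-trivial Jordan block with nonzero eigenvalue) and the cases $\det T_0\neq0$ (distinct real or complex eigenvalues) are exactly the ones excluded. Recognizing and simplifying this algebraic system is the conceptual core of the argument.

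For the converse I treat the two good types separately. When $T=c\operatorname{Id}$ the metric reads $g_{\affcon,\Phi,c\operatorname{Id}}=\iota\operatorname{id}\circ\iota(c^2\operatorname{id})+g_\affcon+\pi^*\Phi$, which is of the form in Theorem~\ref{thm-7.2} with vanishing cubic term; hence it is self-dual, and being half conformally flat it is Bach-flat for every $\affcon$ and $\Phi$. The nilpotent case carries the genuinely new content: here I must show that \emph{every} coefficient of every $\bach_{ij}$, not merely the top-degree one, vanishes for arbitrary $\affcon$ and $\Phi$. This is where the parallelism of $T$ and the identities $T^2=0$, $\operatorname{tr}(T)=0$ are used throughout the lower-order terms, and it simultaneously establishes the independence of Bach-flatness from the deformation tensor $\Phi$ asserted in the introduction. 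The main obstacle is the sheer size of the curvature computation, which is best handled with symbolic algebra; the real mathematical content is the collapse of the obstruction to the single algebraic dichotomy for $T$ and the full lower-order cancellation in the nilpotent case.
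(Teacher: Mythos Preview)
Your overall plan coincides with the paper's: compute $\bach$ for $g_{\affcon,\Phi,T}$ in coordinates with constant $T^j_i$, isolate the pieces that depend only on $T$, reduce the ``only if'' direction to an algebraic condition on the matrix $T$, and handle the converse via self-duality (Theorem~\ref{thm-7.2}) for $T=c\operatorname{Id}$ and a direct check in adapted coordinates $T\partial_1=\partial_2$, $T\partial_2=0$ for nilpotent $T$.

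There is, however, one concrete error in your structural description: the assertion that $\bach(\partial_{i'},\cdot)=0$ identically is false. The paper's explicit computation (Equation~\eqref{eq:Bach tensor}) shows that the mixed block $\tilde\bach=(\bach(\partial_i,\partial_{j'}))$ is a nonzero \emph{constant} matrix, namely
\[
\tilde\bach=\tfrac16\bigl((T^1_1-T^2_2)^2+4T^1_2T^2_1\bigr)(T^1_1+T^2_2)\cdot
\begin{pmatrix} T^1_1-T^2_2 & 2T^2_1\\ 2T^1_2 & T^2_2-T^1_1\end{pmatrix},
\]
which in your notation is proportional to $(\tr^2-4\dt)\,\tr\cdot T_0$. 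This does not break your strategy---on the contrary, $\tilde\bach$ is itself one of the purely-$T$ obstructions you are after, and the paper uses it together with the quadratic-in-fibre coefficients of $\bach_{11},\bach_{12},\bach_{22}$ to run the case analysis that forces $T_0=0$ or $\tr=\dt=0$. Neither $\tilde\bach$ alone nor the quadratic coefficients alone suffice cleanly (for instance $\tr=0$, $\dt\neq0$ kills $\tilde\bach$ but not the quadratic part), so when you carry out the computation you should retain this block rather than discard it. A minor related point: parallelism of $T$ imposes the linear relations~\eqref{eq:nablaT} among the ${}^\affcon\Gamma_{ij}{}^k$, which the paper uses from the outset; these do not affect the top-fibre-degree terms but are needed to simplify the lower-order ones in the nilpotent converse.
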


\begin{proof}
In order to compute the Bach tensor of $(T^*\Sigma,g_{\affcon,\Phi,T})$, first of all observe that being $T$ parallel imposes some restrictions on the components $T_i^j$ as well as on the Christoffel symbols of the connection $\affcon$:
\begin{equation}\label{eq:nablaT}
\affcon T=0:\left\{
\begin{array}{l}
	T^1_2\, {}^\affcon\Gamma_{11}^2 - T^2_1\,{}^\affcon\Gamma_{12}^1=0, 
	
	\\[0.05in]
	
	T^1_2\, {}^\affcon\Gamma_{12}^2 - T^2_1\,{}^\affcon\Gamma_{22}^1=0,
	
	\\[0.05in]
	
	T^2_1\,{}^\affcon\Gamma_{11}^1+(T^2_2-T^1_1)\,{}^\affcon\Gamma_{11}^2-T^2_1\,{}^\affcon\Gamma_{12}^2=0,
	
	\\[0.05in]
	
	T^1_2\,{}^\affcon\Gamma_{11}^1+(T^2_2-T^1_1)\,{}^\affcon\Gamma_{12}^1-T^1_2\,{}^\affcon\Gamma_{12}^2=0,
	
	\\[0.05in]
	
	T^2_1\,{}^\affcon\Gamma_{12}^1+(T^2_2-T^1_1)\,{}^\affcon\Gamma_{12}^2-T^2_1\,{}^\affcon\Gamma_{22}^2=0,
	
	\\[0.05in]
	
	T^1_2\,{}^\affcon\Gamma_{12}^1+(T^2_2-T^1_1)\,{}^\affcon\Gamma_{22}^1-T^1_2\,{}^\affcon\Gamma_{22}^2=0.
\end{array}\right.
\end{equation}

Then, expressing the Bach tensor $\bach_{ij}=\bach(\partial_i,\partial_j)$ in induced coordinates
$(x^i,x_{i'})$, a long but straightforward calculation shows that
\begin{equation}\label{eq:Bach tensor}
(\bach_{ij}) = 
\left(
\begin{array}{c|c}
	\begin{array}{cc}
		\bach_{11} & \bach_{12}
		\\
		\bach_{12} & \bach_{22}
	\end{array}
	&
	\tilde{\bach}
	
	\\
	\hline
	\\[-0.15in]
	
	\tilde{\bach}
	&
	0 
\end{array}
\right),
\end{equation}
where 
\[
\tilde{\bach}=\frac{1}{6} ( (T^1_1 - T^2_2)^2 + 4 T^1_2 T^2_1)\cdot(T^1_1 + T^2_2)\cdot
\left(
\begin{array}{cc}
	T^1_1 - T^2_2 &    2 T^2_1 
	\\
    2 T^1_2  &  T^2_2- T^1_1
\end{array}
\right)
\]
and where the coefficients $\bach_{11}$, $\bach_{12}$ and $\bach_{22}$ can be written in terms of $\dt=\det (T)$ and $\tr=\operatorname{tr}(T)$ as follows:

\medskip

\noindent
$
\begin{array}{l}
\bach_{11}=
-\frac{1}{6}\left\{
10 \dt^3 
- 2  (\tr^2+ 13 T^2_2 \tr-15 (T^2_2)^2  )\dt^2
\right.
\\[0.05in]
\phantom{................}
\left.
+  (5 \tr - T^2_2) ( \tr - T^2_2) \tr^2  \dt
- (\tr - T^2_2)^2 \tr^4 
 \right\}x_{1'}^2
 
\\[0.1in]
\phantom{.........}

-\frac{1}{6}
\left\{
(T^2_1)^2 (30 \dt^2 + \tr^2 \dt - \tr^4)
\right\} x_{2'}^2

\\[0.1in]
\phantom{.........}

- \frac{1}{3} \left\{
 (13\tr - 30 T^2_2) \dt^2  
+  (3\tr - T^2_2) \tr^2 \dt 
-   (\tr - T^2_2) \tr^4
\right\} T^2_1\,  x_{1'} x_{2'}

\\[0.1in]
\phantom{.........}

-\frac{1}{3}\left\{
 ({}^\affcon\Gamma_{11}^1 + 2 {}^\affcon\Gamma_{12}^2)(\tr -2 T^2_2)+ 2 T^2_1 {}^\affcon\Gamma_{22}^2
\right\}  (\tr^2-4 \dt) \tr\,  x_{1'}

\\[0.1in]
\phantom{.........}
- \frac{1}{3}  \left\{
 {}^\affcon\Gamma_{11}^2 (\tr-2 T^2_2)  + 2 T^2_1 {}^\affcon\Gamma_{12}^2 
 \right\}  (\tr^2-4 \dt ) \tr\,  x_{2'}
 
\\[0.1in]
\phantom{.........}

-\frac{1}{6}
\left\{
10 \dt^2 
+ (3 \tr^2- 22 T^2_2 \tr +14 (T^2_2)^2  )\dt 
-  ( \tr^2- 4 T^2_2 \tr + 2 (T^2_2)^2 ) \tr^2
 \right\} \Phi_{11}

\\[0.1in]
\phantom{.........}

-\frac{1}{3} \left\{
( 11 \tr - 14 T^2_2 )\dt 
- 2 (\tr-T^2_2)  \tr^2
\right\}  T^2_1\Phi_{12}

\\[0.1in]
\phantom{.........}

+\frac{1}{3}\left\{
 \tr^2-7 \dt  
\right\}(T^2_1)^2 \Phi_{22}

\\[0.1in]
\phantom{.........}

-\frac{2}{3}
(\partial_2{}^\affcon\Gamma_{11}^2-\partial_1{}^\affcon\Gamma_{12}^2)
(4 \dt - \tr^2) \,,
\end{array}
$

\bigskip

\noindent
$
\begin{array}{l}
\bach_{12}=
-\frac{1}{6}\left\{
 (13 \tr - 30 T^2_2 )  \dt^2
+  ( 3 \tr - T^2_2 )   \tr^2 \dt
- ( \tr-T^2_2 )  \tr^4 
\right\}  T^1_2x_{1'}^2
 
\\[0.1in]
\phantom{.........}

+\frac{1}{6}\left\{
(17 \tr-30 T^2_2) \dt^2
-  (2 \tr + T^2_2)     \tr^2 \dt
+ T^2_2 \tr^4  
\right\} T^2_1 x_{2'}^2

\\[0.1in]
\phantom{.........}

+\frac{1}{6}\left\{
20 \dt^3 
+ 4 (4 \tr^2  - 15 T^2_2 \tr  + 15 (T^2_2)^2) \dt^2
\right.
\\[0.05in]
\phantom{................}
\left. 
-( 3 \tr^2 + 2 T^2_2 \tr - 2 (T^2_2)^2) \tr^2 \dt
+ 2  ( \tr-T^2_2) T^2_2 \tr^4 
\right\} x_{1'} x_{2'}

\\[0.1in]
\phantom{.........}

-\frac{1}{3}\left\{
 {}^\affcon\Gamma_{12}^1(\tr-2 T^2_2 ) 
 + 2  T^2_1  {}^\affcon\Gamma_{22}^1
\right\} ( \tr^2-4 \dt)\tr  x_{1'}

\\[0.1in]
\phantom{.........}

-\frac{1}{3}\left\{
{}^\affcon\Gamma_{12}^2( \tr-2 T^2_2 )  
+   2 T^2_1 {}^\affcon\Gamma_{22}^2
\right\}( \tr^2-4 \dt)\tr   x_{2'}

\\[0.1in]
\phantom{.........}

-\frac{1}{6}\left\{
 ( 11 \tr - 14 T^2_2)  \dt
 - 2  (\tr -T^2_2 ) \tr^2
\right\}  T^1_2 \Phi_{11}

\\[0.1in]
\phantom{.........}

+\frac{1}{6}\left\{
4 \dt^2 
+ (6 \tr^2- 28 T^2_2 \tr + 28 (T^2_2)^2 )\dt
-(\tr-2 T^2_2 )^2   \tr^2 
\right\} \Phi_{12}

\\[0.1in]
\phantom{.........}

+\frac{1}{6}\left\{
( 3 \tr-14 T^2_2 ) \dt
+ 2  T^2_2 \tr^2
\right\}  T^2_1  \Phi_{22}

\\[0.1in]
\phantom{.........}

-\frac{1}{3}\left\{
(\partial_2{}^\affcon\Gamma_{11}^1-\partial_1{}^\affcon\Gamma_{12}^1
-\partial_2{}^\affcon\Gamma_{12}^2+\partial_1{}^\affcon\Gamma_{22}^2)(\tr^2-4\dt)
\right\}  \,,
\end{array}
$

\bigskip

\noindent
$
\begin{array}{l}
\bach_{22}=
-\frac{1}{6}\left\{ 
 30\dt^2 
  - \tr^4 
 +  \tr^2 \dt 
 \right\}  (T^1_2)^2  x_{1'}^2

\\[0.1in]
\phantom{.........}

-\frac{1}{6}\left\{
10 \dt^3 
+  2 ( \tr^2- 17 T^2_2 \tr+ 15 (T^2_2)^2 )\dt^2 
\right.
\\[0.05in]
\phantom{................}
\left. 
+  ( 4 \tr + T^2_2)  T^2_2 \tr^2 \dt
- (T^2_2)^2 \tr^4  
\right\}x_{2'}^2

\\[0.1in]
\phantom{.........}

+\frac{1}{3}\left\{
 ( 17 \tr-30 T^2_2)\dt^2
-  ( 2 \tr  +T^2_2) \tr^2 \dt
+ T^2_2 \tr^4 
\right\}  T^1_2  x_{1'} x_{2'}

\\[0.1in]
\phantom{.........}

-\frac{1}{3}\left\{
    {}^\affcon\Gamma_{22}^1( \tr-2 T^2_2) + 
   2 T^1_2 {}^\affcon\Gamma_{22}^2\right\} ( \tr^2-4 \dt) \tr x_{1'}
   
\\[0.1in]
\phantom{.........}

+\frac{1}{3}\left\{
{}^\affcon\Gamma_{22}^2(\tr-2 T^2_2 )
-2 T^2_1 {}^\affcon\Gamma_{22}^1 
\right\}( \tr^2-4 \dt) \tr  x_{2'}

\\[0.1in]
\phantom{.........}

-\frac{1}{3}
  (7 \dt - \tr^2) (T^1_2)^2 
\Phi_{11}

\\[0.1in]
\phantom{.........}

+\frac{1}{3}\left\{
( 3 \tr-14 T^2_2) T^1_2 \dt
+2 T^1_2 T^2_2 \tr^2 
\right\}\Phi_{12}

\\[0.1in]
\phantom{.........}

-\frac{1}{6}\left\{
10 \dt^2 
- ( 5 \tr^2 + 6 T^2_2 \tr-14 (T^2_2)^2) \dt
+ \tr^4 
- 2 (T^2_2)^2 \tr^2
\right\}\Phi_{22}

\\[0.1in]
\phantom{.........}

-\frac{2}{3}
(\partial_2{}^\affcon\Gamma_{12}^1-\partial_1{}^\affcon\Gamma_{22}^1)
(\tr^2-4\dt)  \,.
\end{array}
$

\bigskip

Suppose  first that  the Bach tensor of $(T^*\Sigma,g_{\affcon,\Phi,T})$ vanishes. We start analyzing   the case $T^1_2=0$. In this case, the expression of $\tilde \bach$ in  Equation~\eqref{eq:Bach tensor} reduces to
\begin{equation}\label{eq:Btilde P12=0}
\tilde{\bach}=\frac{1}{6}  (T^1_1 - T^2_2)^2\cdot(T^1_1 + T^2_2)\cdot
\left(
\begin{array}{cc}
	T^1_1 - T^2_2 &  2  T^2_1 
	\\
    0  & T^2_2 - T^1_1
\end{array}
\right).
\end{equation}
If $T^2_2 = T^1_1$, we differentiate the component $\bach_{11}$ in  Equation~\eqref{eq:Bach tensor}  twice with respect to $x_{2'}$  to obtain  $T^2_1 T^1_1=0$. Thus, either $T^2_1=0$ and $T$ is a multiple of the identity, or $T^1_1=0$ and, in such a case, $T$ is determined by $T\partial_1=T^2_1\partial_2$ and therefore it is nilpotent. If $T^2_2\neq T^1_1$, then Equation~\eqref{eq:Btilde P12=0} implies that $T^2_2=-T^1_1$. In this case, we differentiate the component $\bach_{22}$ in Equation~\eqref{eq:Bach tensor} twice with respect to $x_{2'}$ and obtain $T^1_1=0$. Thus, as before, $T$ is nilpotent.

\medskip

Next we analyze the case $T^1_2\neq 0$. We use Equation~\eqref{eq:nablaT} to express
\[
\begin{array}{ll}
	{}^\affcon\Gamma_{11}^1=\tfrac{T^1_1-T^2_2}{T^1_2}{}^\affcon\Gamma_{12}^1
		       +\tfrac{T^2_1}{T^1_2} {}^\affcon\Gamma_{22}^1,
	&\quad
	{}^\affcon\Gamma_{11}^2=\tfrac{T^2_1}{T^1_2} {}^\affcon\Gamma_{12}^1,
	\\[0.1in]
	{}^\affcon\Gamma_{12}^2=\tfrac{T^2_1}{T^1_2} {}^\affcon\Gamma_{22}^1,
	&\quad
	{}^\affcon\Gamma_{22}^2={}^\affcon\Gamma_{12}^1
	        -\tfrac{T^1_1-T^2_2}{T^1_2}{}^\affcon\Gamma_{22}^1.
\end{array}
\]
Considering the component  $\tilde \bach_{11}$ in Equation~\eqref{eq:Bach tensor},
\[
	\tilde \bach_{11}=\frac{1}{6} (T^1_1 - T^2_2)\cdot(T^1_1 + T^2_2) \cdot
	 ( (T^1_1 - T^2_2)^2 + 4 T^1_2 T^2_1),
\]
we  analyze separately  the vanishing of each one of the three factors in  $\tilde \bach_{11}$.

Assume that $T^2_2=T^1_1$. In this case,  component $\tilde \bach_{12}$ in  Equation~\eqref{eq:Bach tensor} reduces to $\tilde \bach_{12}=\frac{8}{3}T^1_2 (T^2_1)^2 T^1_1$; since we are assuming that  $T^1_2\neq 0$, then either $T^2_1=0$ or $T^2_1\neq 0$ and $T^1_1=0$. If $T^2_1=0$,   the only non-zero component of the Bach tensor is given by $\bach_{22}=-(T^1_2)^2(T^1_1)^2(3(T^1_1)^2 x_{1'}^2+\Phi_{11})$, from where it follows that $T^1_1=0$ and hence $T$ is determined by $T\partial_2=T^1_2\partial_1$ and is nilpotent. If $T^2_1\neq 0$ and $T^1_1=0$, then we differentiate the component $\bach_{12}$ in  Equation~\eqref{eq:Bach tensor} with respect to $x_{1'}$ and $x_{2'}$ to get $T^1_2 T^2_1=0$, which is not possible since both $T^1_2$ and  $T^2_1$ are non-null.

Suppose now that $T^2_2=-T^1_1$. In this case, we differentiate the component $\bach_{22}$ in  Equation~\eqref{eq:Bach tensor} twice with respect to $x_{1'}$ and as a consequence we obtain $T^1_2 (T^1_2 T^2_1+(T^1_1)^2)=0$; since we are assuming $T^1_2\neq 0$, it follows that $T^2_1=-\frac{(T^1_1)^2}{T^1_2}$. Thus, the (1,1)-tensor field  $T$ is given by $T\partial_1 = T^1_1\partial_1-\frac{(T^1_1)^2}{T^1_2}\partial_2$ and $T\partial_2=T^1_2\partial_1-T^1_1\partial_2$, and therefore it is nilpotent as well.

Finally, suppose that $	  (T^1_1 - T^2_2)^2 + 4 T^1_2 T^2_1=0$; since $T^1_2\neq 0$, this is equivalent to $T^2_1=-\frac{(T^1_1-T^2_2)^2}{4T^1_2}$. Now, we differentiate the component $\bach_{22}$ in  Equation~\eqref{eq:Bach tensor} twice with respect to $x_{1'}$ to obtain $T^1_2(T^1_1+T^2_2)=0$. Thus, we have that $T^2_2=-T^1_1$ and $T$ is given by $T\partial_1 = T^1_1\partial_1-\frac{(T^1_1)^2}{ T^1_2}\partial_2$ and $T\partial_2=T^1_2\partial_1-T^1_1\partial_2$, which again implies that $T$ is nilpotent.

\medskip

To conclude the proof we show the ``only if'' part. If $T$ is a multiple of the identity, then $(T^*\Sigma,g_{\affcon,\Phi,T})$ is self-dual by Theorem \ref{thm-7.2} and therefore it has vanishing Bach tensor. Thus, we suppose $T$ is parallel and  nilpotent and,  in this case, we can choose a system of coordinates $(x^1,x^2)$ such that $T$ is determined  by $T\partial_1=\partial_2$ and $T\partial_2=0$. Hence, examining Equation~\eqref{eq:Bach tensor}, clearly  $\tilde \bach=0$ and,  since $\dt=\tr=0$, one easily checks that $\bach_{11}=\bach_{12}= \bach_{22}=0$, showing that  the Bach tensor of $(T^*\Sigma,g_{\affcon,\Phi,T})$ vanishes.      
\end{proof}

\begin{remark}\label{re:3-2}
\rm
We emphasize that even though the Bach tensor of the metrics $g_{\affcon,\Phi,T}$ depends on the choice of $\Phi$ (as shown in the proof of Theorem~\ref{th:Bach-zero}), the existence of Bach-flat metrics in Theorem \ref{th:Bach-zero} is independent of the symmetric $(0,2)$-tensor field $\Phi$, thus providing an infinite family of examples for each initial data $(\Sigma,\affcon, T)$.
Moreover, note that the metrics $g_{\affcon,\Phi,T}$ are generically non-isometric for different deformation tensor fields $\Phi$.
\end{remark}

The Bach-flat modified Riemannian extensions in Theorem \ref{th:Bach-zero} obtained from a $(1,1)$-tensor field of the form $T=c\operatorname{Id}$ are not of interest for our purposes since they all are half conformally flat (cf. Theorem \ref{thm-7.2}). Hence, in what follows we focus on the case when $T$ is a parallel nilpotent $(1,1)$-tensor field and refer to $g_{\affcon,\Phi,T}$ as a \emph{nilpotent Riemannian extension}.

\subsection{Affine connections supporting parallel nilpotent tensors}

The proof of Theorem \ref{th:Bach-zero} shows that the existence of a parallel 
nilpotent tensor field $T$ on a torsion free affine surface $(\Sigma,\affcon)$ imposes some restrictions 
on~$\affcon$.

\begin{proposition}\label{prop:1}
Let $(\Sigma,\affcon,T)$ be a torsion free affine surface equipped with a nilpotent $(1,1)$-tensor field $T$. If $T$ is parallel, then
\begin{enumerate}
\item[(i)] $\operatorname{ker}T$ is a parallel one-dimensional distribution whose integral curves are geodesics of $(\Sigma,\affcon)$.
\item[(ii)] The symmetric part of the Ricci tensor, $\rho^\affcon_{sym}$, is zero or of rank one
and recurrent, i.e.,  $\affcon\rho^\affcon_{sym}=\eta\otimes\rho^\affcon_{sym}$, for some one-form $\eta$.
\end{enumerate}
\end{proposition}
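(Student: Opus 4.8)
The plan is to exploit that a nonzero nilpotent $(1,1)$-tensor field on a surface satisfies $T^2=0$ and has rank one, so that $\operatorname{ker}T=\operatorname{im}T$ is a one-dimensional distribution (if $T=0$ the statement is vacuous, so I assume $T\neq0$). Part (i) is then purely formal. Since $T$ is parallel, for any vector field $V$ with $TV=0$ and any $X$ one has $0=(\affcon_X T)V=\affcon_X(TV)-T(\affcon_X V)=-T(\affcon_X V)$, whence $\affcon_X V\in\operatorname{ker}T$; thus $\operatorname{ker}T$ is a parallel distribution. In particular, if $V$ is a local nonvanishing section of $\operatorname{ker}T$ then $\affcon_V V\in\operatorname{ker}T$, i.e. $\affcon_V V=\phi\,V$ for some function $\phi$, so the integral curves of $\operatorname{ker}T$ are pregeodesics and, after an affine reparametrization, geodesics of $(\Sigma,\affcon)$.

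For (ii) I would pass to the adapted coordinates $(x^1,x^2)$ already used in the proof of Theorem~\ref{th:Bach-zero}, in which $T\partial_1=\partial_2$ and $T\partial_2=0$, so that $\operatorname{ker}T=\operatorname{span}\{\partial_2\}$. Feeding $T^1_1=T^1_2=T^2_2=0$, $T^2_1=1$ into the parallelism system~\eqref{eq:nablaT} collapses the Christoffel symbols to
$$
{}^\affcon\Gamma_{12}^1={}^\affcon\Gamma_{22}^1={}^\affcon\Gamma_{22}^2=0,\qquad
{}^\affcon\Gamma_{11}^1={}^\affcon\Gamma_{12}^2=:a,\qquad {}^\affcon\Gamma_{11}^2=:b,
$$
with $a,b\in\mathcal{C}^\infty(\Sigma)$ otherwise arbitrary. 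A direct curvature computation then yields the Ricci components
$$
\rho^\affcon_{11}=\partial_2 b-\partial_1 a,\qquad \rho^\affcon_{12}=-\rho^\affcon_{21}=\partial_2 a,\qquad \rho^\affcon_{22}=0,
$$
so that, writing $\psi:=\partial_2 b-\partial_1 a$, the symmetric part reduces to $\rho^\affcon_{sym}=\psi\,dx^1\otimes dx^1$. This is either zero or of rank one, which proves the first half of (ii).

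Recurrence is then a short verification with the reduced symbols. Since $\rho^\affcon_{sym}$ is supported entirely in the $(1,1)$-slot, $(\affcon_{\partial_m}\rho^\affcon_{sym})_{jk}$ vanishes unless $j=k=1$, where it equals $\partial_m\psi-2\,{}^\affcon\Gamma_{m1}^1\,\psi$. Using ${}^\affcon\Gamma_{11}^1=a$ and ${}^\affcon\Gamma_{21}^1=0$, one obtains on the open set $\{\psi\neq0\}$ the identity $\affcon\rho^\affcon_{sym}=\eta\otimes\rho^\affcon_{sym}$ with $\eta=\tfrac{d\psi}{\psi}-2a\,dx^1$, which is exactly the claimed recurrence.

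I expect the only real work to lie in organizing the curvature computation that produces the displayed Ricci components; everything afterwards is substitution of the constraints coming from~\eqref{eq:nablaT}. The one genuinely delicate point, if a fully invariant statement is wanted, is the behaviour on the locus where $\rho^\affcon_{sym}$ degenerates: the recurrence one-form $\eta$ is manufactured on the open set where $\psi\neq0$, and one should either restrict the recurrence assertion to that set or argue that the distinguished cases ($\rho^\affcon_{sym}\equiv0$ versus $\rho^\affcon_{sym}$ of constant rank one) are the only ones of interest.
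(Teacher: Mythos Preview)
Your proof is correct and follows essentially the same route as the paper: adapted coordinates with $T\partial_1=\partial_2$, the reduction of the Christoffel symbols via \eqref{eq:nablaT}, the explicit Ricci computation, and the recurrence one-form $\eta=d(\ln\psi)-2a\,dx^1$. Your coordinate-free argument for (i) is a little cleaner than the paper's direct coordinate check, and your closing remark about the locus $\{\psi=0\}$ flags a point the paper simply elides by writing $\ln\rho^\affcon_{sym}(\partial_1,\partial_1)$ without comment.
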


\begin{proof}
Let $(\Sigma,D)$ be a torsion free affine surface admitting a parallel nilpotent $(1,1)$-tensor field $T$. Then there exist suitable coordinates $(x^1,x^2)$ where $T\partial_1=\partial_2$, $T\partial_2=0$ and it follows from \eqref{eq:nablaT} that the Christoffel symbols of $\affcon$ satisfy
\begin{equation}\label{eq:Christ-T-parallel}
    {}^\affcon\Gamma_{12}^1=0,
	\qquad
	{}^\affcon\Gamma_{12}^2={}^\affcon\Gamma_{11}^1,
	\qquad
	{}^\affcon\Gamma_{22}^1=0,
	\qquad
{}^\affcon\Gamma_{22}^2=0\,.
\end{equation}
In such a case the one-dimensional distribution $\operatorname{ker}T(=\operatorname{span}\{\partial_2\})$ is parallel and $\partial_2$ is a geodesic vector field, thus showing Assertion (i). Moreover, the Ricci tensor of any affine connection given by \eqref{eq:Christ-T-parallel} satisfies
\[
\rho^\affcon = \left(\begin{array}{cc}
\partial_2{}^\affcon\Gamma_{11}^2-\partial_1{}^\affcon\Gamma_{11}^1 &\partial_2{}^\affcon\Gamma_{11}^1\\
\noalign{\medskip}
-\partial_2{}^\affcon\Gamma_{11}^1&0
\end{array}\right)\,,
\]
from where it follows that the symmetric and the skew-symmetric parts of the Ricci tensor are given by
\[
\rho^\affcon_{sym}= (\partial_2{}^\affcon\Gamma_{11}^2-\partial_1{}^\affcon\Gamma_{11}^1)\, dx^1\circ dx^1  \,,
\qquad 
\rho^\affcon_{sk}= \partial_2{}^\affcon\Gamma_{11}^1\, dx^1\wedge dx^2\,.
\]
Hence $\rho^\affcon_{sym}$ is either zero or of rank one.
Moreover, a straightforward calculation of the covariant derivative of the symmetric part of the Ricci tensor gives
$$
\begin{array}{rcl}
(\affcon_{\partial_1}\rho^\affcon_{sym})(\partial_1,\partial_1)&=&
\partial_{12}{}^\affcon\Gamma_{11}^2-\partial_{11}{}^\affcon\Gamma_{11}^1
-2{}^\affcon\Gamma_{11}^1(\partial_2{}^\affcon\Gamma_{11}^2-\partial_1{}^\affcon\Gamma_{11}^1)\,,
\\
\noalign{\medskip}
(\affcon_{\partial_2}\rho^\affcon_{sym})(\partial_1,\partial_1)&=&
\partial_{22}{}^\affcon\Gamma_{11}^2-\partial_{12}{}^\affcon\Gamma_{11}^1\,,
\end{array}
$$
the other components being zero. This shows that $\rho^\affcon_{sym}$ is recurrent, i.e.,  $\affcon\rho^\affcon_{sym}=\eta\otimes\rho^\affcon_{sym}$, with recurrence one-form
\begin{equation}
\label{eq:recurrence-eta}
\eta=\{\partial_1 \ln\rho^\affcon_{sym}(\partial_1,\partial_1)-2{}^\affcon\Gamma_{11}^1 \}\, dx^1
+ \partial_2\ln\rho^\affcon_{sym}(\partial_1,\partial_1)\, dx^2\,,
\end{equation}
which proves (ii).
\end{proof}

\begin{remark}
\label{re:recurrente}\rm
It follows from the expression of $\rho^\affcon_{sk}$ in the proof of Proposition \ref{prop:1} that any connection given by \eqref{eq:Christ-T-parallel} has symmetric Ricci tensor if and only if $\partial_2{}^\affcon\Gamma_{11}^1=0$, in which case $\rho^\affcon$ is recurrent.
Now, it follows from the work of Wong \cite{Wong} that any such connection can be described in suitable coordinates $(\bar u^1,\bar u^2)$ by $\affcon{\partial_{\bar u^1}}\partial_{\bar u^1}={}^{\bar u}\Gamma_{11}^2(\bar u^1,\bar u^2) \partial_{\bar u^2}$, where ${}^{\bar u}\Gamma_{11}^2(\bar u^1,\bar u^2)$ is an arbitrary function satisfying $\partial_{\bar u^2}{}^{\bar u}\Gamma_{11}^2(\bar u^1,\bar u^2)\neq 0$. Moreover, the only non-zero component of the Ricci tensor is $\rho^\affcon(\partial_{\bar u^1},\partial_{\bar u^1})=\partial_{\bar u^2}{}^{\bar u}\Gamma_{11}^2$, and  the recurrence one-form $\omega$ is given by
\begin{equation}
\label{eq:recurrence-ricci}
\omega=\partial_{\bar u^1}(\ln\partial_{\bar u^2}{}^{\bar u}\Gamma_{11}^2) d\bar u^1
+ \partial_{\bar u^2}(\ln\partial_{\bar u^2}{}^{\bar u}\Gamma_{11}^2) d\bar u^2.
\end{equation}

Further assume that $T$ is a parallel nilpotent $(1,1)$-tensor field on $(\Sigma,\affcon)$. Then a straightforward calculation shows that its expression in the coordinates $(\bar u^1,\bar u^2)$ is given by 
$T\partial_{\bar u^1}=T_1^2\, \partial_{\bar u^2}$ and $T\partial_{\bar u^2}=0$, for some   $T_1^2\in\mathbb{R}$, $T_1^2\neq 0$. Hence, considering the modified coordinates $(u^1,u^2)=(\bar u^1,(T_1^2)^{-1}\bar u^2)$ one has that 
$T\partial_{u^1}=\partial_{u^2}$ and $T\partial_{u^2}=0$, and the connection is determined by the only non-zero Christoffel symbol ${}^u\Gamma_{11}^2$.
Moreover it follows from the expression of the recurrence one-form $\omega$ that
$\omega(\operatorname{ker}T)=0$ if and only if $\partial_{22}{}^u\Gamma_{11}^2=0$.
\end{remark}

\section{Bach-flat gradient Ricci solitons}\label{se:4}

Let $\Phi$ be a symmetric $(0,2)$-tensor field on $(\Sigma,\affcon,T)$. One  uses the nilpotent structure $T$ to construct an associated symmetric $(0,2)$-tensor field $\widehat{\Phi}$ given by $\widehat{\Phi}(X,Y)$ $=$ $\Phi(TX,TY)$, for all vector fields $X,Y$ on $\Sigma$. 
Further, let $(x^1,x^2)$ be local coordinates where $T\partial_1=\partial_2$, $T\partial_2=0$ and let $\Phi=\Phi_{ij}dx^i\otimes dx^j$. Then $\widehat\Phi$ expresses as
$\widehat{\Phi}=\widehat{\Phi}_{ij}dx^i\otimes dx^j=\Phi_{22}dx^1\otimes dx^1$.

\subsection{Einstein nilpotent Riemannian extensions}

\begin{theorem}\label{re:Ricci}
Let $(\Sigma,\affcon,T)$ be an affine surface equipped with a parallel nilpotent $(1,1)$-tensor field $T$ and let $\Phi$ be a symmetric $(0,2)$-tensor field on $\Sigma$. Then $(T^*\Sigma,g_{\affcon,\Phi,T})$ is Einstein (indeed, Ricci-flat) if and only if $\widehat{\Phi}=-2\rho^\affcon_{sym}$. 
\end{theorem}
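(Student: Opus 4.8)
The plan is to compute the Ricci tensor of $g_{\affcon,\Phi,T}$ directly in adapted coordinates and to isolate the single scalar obstruction to Ricci-flatness. By Proposition~\ref{prop:1} I may fix local coordinates $(x^1,x^2)$ on $\Sigma$ in which $T\partial_1=\partial_2$, $T\partial_2=0$, so that $\dt=\tr=0$ and the Christoffel symbols of $\affcon$ reduce to \eqref{eq:Christ-T-parallel}. In these coordinates the only surviving component of the quadratic term $\iota T\circ\iota T$ is $x_{2'}^2\,(dx^1)^2$, and the metric takes the explicit Walker form
\[
g_{\affcon,\Phi,T}=2\,(dx^1\circ dx_{1'}+dx^2\circ dx_{2'})+g_{11}\,(dx^1)^2+2g_{12}\,dx^1\circ dx^2+\Phi_{22}\,(dx^2)^2,
\]
with $g_{11}=x_{2'}^2-2x_{1'}{}^\affcon\Gamma_{11}^1-2x_{2'}{}^\affcon\Gamma_{11}^2+\Phi_{11}$ and $g_{12}=-2x_{2'}{}^\affcon\Gamma_{11}^1+\Phi_{12}$. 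Writing $\widehat\Phi=\Phi_{22}\,dx^1\otimes dx^1$ and recalling from Proposition~\ref{prop:1} that $\rho^\affcon_{sym}=(\partial_2{}^\affcon\Gamma_{11}^2-\partial_1{}^\affcon\Gamma_{11}^1)\,dx^1\circ dx^1$, the desired identity $\widehat\Phi=-2\rho^\affcon_{sym}$ is nothing but the single scalar equation $\Phi_{22}=-2(\partial_2{}^\affcon\Gamma_{11}^2-\partial_1{}^\affcon\Gamma_{11}^1)$.

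Next I would exploit the Walker structure to reduce the Einstein condition to Ricci-flatness. Since $\mathcal D=\operatorname{span}\{\partial_{1'},\partial_{2'}\}$ is a parallel totally null distribution, the Ricci tensor has nonzero components only in the base block $\rho(\partial_i,\partial_j)$, $i,j\in\{1,2\}$, while $\rho(\partial_i,\partial_{j'})=\rho(\partial_{i'},\partial_{j'})=0$. Because the inverse metric couples base directions only with fibre directions (its base-base block vanishes), it follows that the scalar curvature $\tau=g^{ab}\rho_{ab}$ is identically zero; hence any Einstein metric in this family is automatically Ricci-flat, and it suffices to analyze the vanishing of the base block of $\rho$.

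The computational core is then to evaluate $\rho(\partial_i,\partial_j)$ from the Levi-Civita connection of the metric above, repeatedly using the constraints \eqref{eq:Christ-T-parallel}. I expect these constraints to force $\rho(\partial_2,\partial_2)=0$ and $\rho(\partial_1,\partial_2)=0$ identically, and to make all terms of $\rho(\partial_1,\partial_1)$ that depend on the fibre coordinates $x_{1'},x_{2'}$ cancel, together with every contribution of the skew part $\rho^\affcon_{sk}$ and of $\Phi_{11},\Phi_{12}$, leaving $\rho(\partial_1,\partial_1)$ equal, up to a nonvanishing multiplicative constant, to
\[
\Phi_{22}+2\,(\partial_2{}^\affcon\Gamma_{11}^2-\partial_1{}^\affcon\Gamma_{11}^1).
\]
This mirrors the mechanism behind the $T=c\operatorname{Id}$ Einstein result of \cite{CL-GR-G-VL-PRSA-2009}, the difference being that the rank-one tensor $\widehat\Phi$ now extracts precisely the component $\Phi_{22}$.

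The main obstacle is this last computation: verifying that every fibre-dependent term and every occurrence of $\rho^\affcon_{sk}$ drops out of $\rho(\partial_1,\partial_1)$, so that full four-dimensional Ricci-flatness collapses to the single affine identity $\Phi_{22}=-2\rho^\affcon_{sym}(\partial_1,\partial_1)$. Once this reduction is in place the equivalence is immediate: by the previous paragraph $\rho\equiv 0$ holds if and only if $\Phi_{22}=-2(\partial_2{}^\affcon\Gamma_{11}^2-\partial_1{}^\affcon\Gamma_{11}^1)$, and since $\widehat\Phi=\Phi_{22}\,dx^1\otimes dx^1$ and $\rho^\affcon_{sym}=(\partial_2{}^\affcon\Gamma_{11}^2-\partial_1{}^\affcon\Gamma_{11}^1)\,dx^1\circ dx^1$, this is exactly $\widehat\Phi=-2\rho^\affcon_{sym}$.
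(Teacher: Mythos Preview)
Your approach is essentially the paper's: choose adapted coordinates, compute the Ricci tensor, find that only $\rho(\partial_1,\partial_1)=\Phi_{22}+2\rho^\affcon_{sym}(\partial_1,\partial_1)$ survives, and read off the equivalence. The paper states this as a ``straightforward calculation'' and then observes that the Ricci operator is nilpotent (hence $\tau=0$, so Einstein $\Leftrightarrow$ Ricci-flat).

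There is one step in your outline that does not hold as stated. You claim that because $\mathcal D=\operatorname{span}\{\partial_{1'},\partial_{2'}\}$ is a parallel totally null $2$-plane, the mixed Ricci components $\rho(\partial_i,\partial_{j'})$ vanish automatically, and you use this to conclude $\tau=0$ before doing any computation. For a general Walker metric \eqref{eqn-2.a} this is false: parallelism of $\mathcal D$ forces $\rho(\partial_{i'},\partial_{j'})=0$, but the mixed block $\rho(\partial_i,\partial_{j'})$ can be nonzero (compare the off-diagonal block $\tilde\bach$ in \eqref{eq:Bach tensor}, which is nonzero for generic parallel $T$). Here the mixed components do vanish, but that is a feature of the \emph{nilpotent} Riemannian extension and comes out of the explicit Ricci computation, not from the Walker structure alone. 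So your deduction of $\tau=0$ is circular as written: it needs precisely the computation you postpone to the ``computational core''. The fix is simply to reorder: carry out the Ricci computation first (it yields $\rho_{ij'}=\rho_{i'j'}=0$ and $\rho_{12}=\rho_{22}=0$ directly), and then read off $\tau=0$ from the single surviving component, exactly as the paper does.
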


\begin{proof}
Let $(x^1,x^2)$ be local coordinates on $\Sigma$ so that $T\partial_1=\partial_2$, $T\partial_2=0$, and consider the induced coordinates $(x^1,x^2,x_{1'},x_{2'})$ on $T^*\Sigma$.
A straightforward calculation shows that the Ricci tensor of any nilpotent Riemannnian extension $g_{\affcon,\Phi,T}$ is determined by
\[
\rho(\partial_{1},\partial_{1})=\Phi(\partial_{2},\partial_{2})+2\rho^\affcon_{sym}(\partial_{1},\partial_{1})\,,
\]
the other components being zero. Hence the Ricci operator is nilpotent and $g_{\affcon,\Phi,T}$ has zero scalar curvature. 
Moreover, the Ricci tensor vanishes if and only if $\Phi(\partial_{2},\partial_{2})+2\rho^\affcon_{sym}(\partial_{1},\partial_{1})=0$. The result now follows. 
\end{proof}

\begin{remark}\rm
\label{prop:Cotton}
The Weyl tensor of a pseudo-Riemannian manifold is harmonic if and only if the Cotton tensor vanishes.
Let $(\Sigma,\affcon,T)$ be an affine surface equipped with a parallel nilpotent $(1,1)$-tensor field $T$ and let $\Phi$ be a symmetric $(0,2)$-tensor field on $\Sigma$. Let $(x^1,x^2)$ be local coordinates on $\Sigma$ so that $T\partial_1=\partial_2$, $T\partial_2=0$, and consider the induced coordinates $(x^1,x^2,x_{1'},x_{2'})$ on $T^*\Sigma$. A straightforward calculation shows that the Cotton tensor of $(T^*\Sigma,g_{\affcon,\Phi,T})$ is given by
$$
\mathfrak{C}(\partial_1,\partial_2,\partial_1)=-\{\partial_2\,\Phi(\partial_2,\partial_2)+2\partial_2\,\rho^\affcon_{sym}(\partial_1,\partial_1) \}\,,
$$
the other components being zero. 
Hence
\emph{$(T^*\Sigma,g_{\affcon,\Phi,T})$ has harmonic Weyl tensor if and only if $\widehat{\affcon\Phi}=-2\,\widehat{\eta}\otimes\rho^\affcon_{sym}$, where
	$\widehat{\eta}(X)=\eta(TX)$, $\eta$ being the recurrence one-form in \eqref{eq:recurrence-eta}, and  $\widehat{\affcon\Phi}(X,Y;Z)=\affcon\Phi(TX,TY;TZ)$.}
\end{remark}

\subsection{Gradient Ricci solitons on nilpotent Riemannian extensions}

Recall from Theorem \ref{th:BVGR1} that the affine gradient Ricci soliton equation $\operatorname{Hes}^\affcon_h+2\rho^\affcon_{sym}=0$ determines the potential function of any self-dual gradient Ricci soliton which is not locally conformally flat, independently of the deformation tensor $\Phi$. 
The next theorem shows that, in contrast with the previous situation, for any $h\in\mathcal{C}^\infty(\Sigma)$ with $dh(\operatorname{ker}T)=0$, one may use the symmetric $(0,2)$-tensor field $\operatorname{Hes}^\affcon_h+2\rho^\affcon_{sym}$ to determine a deformation tensor field $\Phi$  so that the resulting nilpotent Riemannian extension is a Bach-flat steady gradient Ricci soliton with potential function $f=h\circ\pi$.

Let  $(T^*\Sigma,g_{\affcon,\Phi,T},f)$ be a gradient Ricci soliton with potential function 
$f\in\mathcal{C}^\infty(T^*\Sigma)$.
Let $(x^1,x^2)$ be local coordinates on $\Sigma$ so that $T\partial_1=\partial_2$, $T\partial_2=0$, and consider the induced coordinates $(x^1,x^2,x_{1'},x_{2'})$ on $T^*\Sigma$.
Since $\displaystyle\operatorname{Hes}_f(\partial_{{i'}},\partial_{{j'}})
=\partial_{{i'}{j'}} f(x^1,x^2, x_{1'},x_{2'})$, it follows from the expression of the Ricci tensor in Theorem \ref{re:Ricci} and the metric tensor \eqref{eq:Riemannian-extension-2} that 
$f=\iota X+h\circ\pi$ for some $h\in\mathcal{C}^\infty(\Sigma)$ and some vector field $X$ on $\Sigma$.
Set $X=A(x^1,x^2)\partial_1+B(x^1,x^2)\partial_2$ in the local coordinates $(x^1,x^2)$, for some $A,B\in\mathcal{C}^\infty(\Sigma)$. Then
$\displaystyle\operatorname{Hes}_f(\partial_{{2}},\partial_{{1'}})=\partial_2A(x^1,x^2)$, from where it follows that 
$X=A(x^1)\partial_1+B(x^1,x^2)\partial_2$. Considering the component
$\displaystyle\operatorname{Hes}_f(\partial_{{2}},\partial_{{2'}})=-A''(x^1)+\partial_2B(x^1,x^2)$,
one has that 
$X=A(x^1)\partial_1+(P(x^1)+x^2 A'(x^1))\partial_2$ for some smooth function $P(x^1)$.
Next the component
$$
\begin{array}{rcl}
\displaystyle\operatorname{Hes}_f(\partial_{{1}},\partial_{{2'}})
&=&A(x^1){}^\affcon\Gamma_{11}^2-x_{2'}A(x^1)\\
\noalign{\medskip}
&&+{}^\affcon\Gamma_{11}^1(P(x^1)+x^2A'(x^1))+P'(x^1)+x^2A''(x^1)\,,
\end{array}
$$
shows that $A=0$ and it reduces to
$\displaystyle\operatorname{Hes}_f(\partial_{{1}},\partial_{{2'}})=P'(x^1)+P(x^1){}^\affcon\Gamma_{11}^1$. A solution $P(x^1)$ of the equation $P'(x^1)+P(x^1){}^\affcon\Gamma_{11}^1=0$ either vanishes identically (and hence $X=0$) or it is nowhere zero, in which case $\partial_2{}^\affcon\Gamma_{11}^1=0$
(see the proof of Theorem \ref{th:conformally-Einstein}). In the later case Proposition \ref{prop:1} shows that  the Ricci tensor of $(\Sigma,\affcon)$ is symmetric and thus recurrent of rank one.

Since we are mainly interested in the case when $\rho^\affcon_{sk}$ is non-zero, the next theorem examines the simpler situation when $X=0$ and $f=h\circ\pi$.

\begin{theorem}
\label{th:solitons}
Let $(\Sigma,\affcon,T)$ be an affine surface equipped with a parallel nilpotent $(1,1)$-tensor field $T$ and let $\Phi$ be a symmetric $(0,2)$-tensor field on $\Sigma$. 
Let $h\in\mathcal{C}^\infty(\Sigma)$ be a smooth function. Then  $(T^*\Sigma,g_{\affcon,\Phi,T},f=h\circ\pi)$ is a Bach-flat gradient Ricci soliton if and only if $dh(\operatorname{ker}T)=0$ and
\begin{equation}
\label{eq:th-solitons-1}
\widehat{\Phi}=-\operatorname{Hes}^\affcon_h-2\rho^\affcon_{sym}\,.
\end{equation}
Moreover the soliton is steady and isotropic.
\end{theorem}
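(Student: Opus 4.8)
The plan is to exploit Theorem~\ref{th:Bach-zero}: since $T$ is parallel and nilpotent, the modified Riemannian extension $(T^*\Sigma,g_{\affcon,\Phi,T})$ is Bach-flat for \emph{every} symmetric $(0,2)$-tensor field $\Phi$. Hence the Bach-flat condition is vacuous here, and the theorem reduces to characterizing when $f=h\circ\pi$ solves the gradient Ricci soliton equation \eqref{eq:ricci-soliton}. I would work throughout in adapted coordinates $(x^1,x^2)$ with $T\partial_1=\partial_2$, $T\partial_2=0$, so that the Christoffel symbols of $\affcon$ obey the relations \eqref{eq:Christ-T-parallel} and $\operatorname{ker}T=\operatorname{span}\{\partial_2\}$, and use the induced coordinates $(x^1,x^2,x_{1'},x_{2'})$ on $T^*\Sigma$.

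First I would record the few Levi-Civita Christoffel symbols of $g_{\affcon,\Phi,T}$ that are actually needed. Because $f$ depends only on the base, only the base components $\partial_k f=h_k$ of $df$ survive, so $\operatorname{Hes}_f(\partial_a,\partial_b)=-\Gamma_{ab}^k h_k$ whenever $\partial_a\partial_b f=0$. A short computation from the metric \eqref{eq:Riemannian-extension-2} gives $\Gamma_{ij'}^k=0$ and $\Gamma_{ij}^k={}^\affcon\Gamma_{ij}^k$ for all indices \emph{except} $\Gamma_{11}^2={}^\affcon\Gamma_{11}^2-x_{2'}$, the single anomalous symbol carrying fiber dependence (coming from the $\iota T\circ\iota T$ term, which contributes $(x_{2'})^2$ to $g_{11}$). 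Consequently $\operatorname{Hes}_f$ has vanishing fiber-fiber and base-fiber components, while its base-base part is the affine Hessian corrected only in the $(1,1)$-slot:
\begin{equation*}
\operatorname{Hes}_f(\partial_1,\partial_1)=\operatorname{Hes}^\affcon_h(\partial_1,\partial_1)+x_{2'}h_2,\qquad \operatorname{Hes}_f(\partial_i,\partial_j)=\operatorname{Hes}^\affcon_h(\partial_i,\partial_j)\ \text{otherwise.}
\end{equation*}

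Next I would read off the soliton equation $\operatorname{Hes}_f+\rho=\lambda g$ component by component, using that $\rho$ has $\rho(\partial_1,\partial_1)=\Phi_{22}+2\rho^\affcon_{sym}(\partial_1,\partial_1)$ as its only nonzero entry (Theorem~\ref{re:Ricci}) and that $g(\partial_i,\partial_{j'})=\delta_i^j$. The base-fiber components give $0=\lambda\delta_i^j$, hence $\lambda=0$ and the soliton is steady. The decisive point is the $(1,1)$ base-base equation: its term $x_{2'}h_2$ is linear in the fiber coordinate while every other quantity is a pullback from $\Sigma$, so the equation can hold identically only if $h_2=dh(\partial_2)=dh(\operatorname{ker}T)=0$. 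Granting $h_2\equiv0$, the relations \eqref{eq:Christ-T-parallel} (namely ${}^\affcon\Gamma_{12}^1={}^\affcon\Gamma_{22}^1={}^\affcon\Gamma_{22}^2=0$) make the $(1,2)$ and $(2,2)$ equations automatic, and the $(1,1)$ equation collapses to $\Phi_{22}=-\operatorname{Hes}^\affcon_h(\partial_1,\partial_1)-2\rho^\affcon_{sym}(\partial_1,\partial_1)$. Since $\widehat\Phi=\Phi_{22}\,dx^1\otimes dx^1$ and, under $h_2=0$, both $\operatorname{Hes}^\affcon_h$ and $\rho^\affcon_{sym}$ have only a $(1,1)$ entry, this scalar identity is exactly the tensorial equation \eqref{eq:th-solitons-1}. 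Reversing the implications proves the converse. Finally, for isotropy I would compute $\nabla f=g^{ab}(\partial_b f)\partial_a=h_1\partial_{1'}$ (using $g^{ij}=0$, $g^{ij'}=\delta_i^j$ and $h_2=0$), which lies in the null distribution, whence $\|\nabla f\|^2=0$, and $\nabla f\neq0$ precisely when $h$ is non-constant.

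The step I expect to require the most care is the bookkeeping around the anomalous Christoffel symbol $\Gamma_{11}^2$: it is the sole source of fiber dependence in $\operatorname{Hes}_f$, and keeping track of it is what simultaneously forces $\lambda=0$, produces the constraint $dh(\operatorname{ker}T)=0$, and guarantees that the seemingly $(0,2)$-tensorial condition \eqref{eq:th-solitons-1} is equivalent to a single scalar equation rather than three independent ones.
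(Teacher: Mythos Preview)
Your proposal is correct and follows essentially the same route as the paper's proof: work in adapted coordinates with $T\partial_1=\partial_2$, use that $\rho$ has only the single nonzero entry $\rho(\partial_1,\partial_1)=\Phi_{22}+2\rho^\affcon_{sym}(\partial_1,\partial_1)$, read off $\lambda=0$ from the base-fiber components, extract $\partial_2 h=0$ from the $x_{2'}$-dependent term in the $(1,1)$ component, and then observe that the remaining $(1,2)$ and $(2,2)$ equations become vacuous so that only the scalar identity $\Phi_{22}+\operatorname{Hes}^\affcon_h(\partial_1,\partial_1)+2\rho^\affcon_{sym}(\partial_1,\partial_1)=0$ survives. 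Your explicit isolation of $\Gamma_{11}^2={}^\affcon\Gamma_{11}^2-x_{2'}$ as the unique fiber-dependent Christoffel symbol entering $\operatorname{Hes}_f$ is a clean way to organize the computation, but it is the same computation the paper carries out.
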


\begin{proof}
Taking local coordinates on $T^*\Sigma$ as above and setting $f=h\circ\pi$, one has that
$
\operatorname{Hes}_f(\partial_{1},\partial_{{1'}})+\rho(\partial_{1},\partial_{{1'}})=\lambda g(\partial_{1},\partial_{{1'}})
$
leads to $\lambda=0$, which shows that the soliton is steady.
A straightforward calculation shows that the remaining non-zero terms in the gradient Ricci soliton equation are given by
$$
\begin{array}{rcl}
\operatorname{Hes}_f(\partial_{2},\partial_{2})+\rho(\partial_{2},\partial_{2})&=&\partial_{22}h\,,
\\
\noalign{\medskip}
\operatorname{Hes}_f(\partial_{1},\partial_{{2}})+\rho(\partial_{1},\partial_{{2}})&=&\partial_{12}h -{}^\affcon\Gamma_{11}^1\partial_{2}h
\,,
\\
\noalign{\medskip}
\operatorname{Hes}_f(\partial_{1},\partial_{{1}})+\rho(\partial_{1},\partial_{{1}})&=&
x_{2'}\,\partial_{2}h-{}^\affcon\Gamma_{11}^2\,\partial_{2}h+\partial_{11}h-{}^\affcon\Gamma_{11}^1\partial_{1}h
\\
\noalign{\medskip}
&&+\Phi_{22}+2\partial_{2}{}^\affcon\Gamma_{11}^2
-2\partial_{1}{}^\affcon\Gamma_{11}^1
\,.
\end{array}
$$
It immediately follows from the equation $(\operatorname{Hes}_f+\rho)(\partial_{1},\partial_{{1}})=0$ that $\partial_2h=0$, which shows that $dh(\operatorname{ker}T)=0$.  
The only remaining equation now becomes
$$
\begin{array}{rcl}
\operatorname{Hes}_f(\partial_{1},\partial_{{1}})+\rho(\partial_{1},\partial_{{1}})&=&
\partial_{11}h-{}^\affcon\Gamma_{11}^1\partial_{1}h
+\Phi_{22}+2\partial_{2}{}^\affcon\Gamma_{11}^2
-2\partial_{1}{}^\affcon\Gamma_{11}^1
\\

\noalign{\medskip}
&=&\Phi(\partial_2,\partial_2)+\operatorname{Hes}^\affcon_h(\partial_{1},\partial_{1})
+2\rho^\affcon_{sym}(\partial_{1},\partial_{1})\,,
\end{array}
$$
from which Equation \eqref{eq:th-solitons-1} follows.
Moreover, it also follows from the form of the potential function that $\nabla f=h'(x^1)\partial_{{1'}}$, and thus $\|\nabla f\|^2=0$ (equivalently the level hypersurfaces of the potential function are degenerate submanifolds of $T^*\Sigma$), which shows that the soliton is isotropic.
\end{proof}

\begin{remark}
\rm\label{re:4-3}
The tensor field $\mathbb{D}_{ijk}= \mathfrak{C}_{ijk}+W_{ijk\ell}\nabla_\ell f$ introduced in  \cite{GRS2} plays an essential role in analyzing the geometry of Bach-flat gradient Ricci solitons. Local conformal flatness in \cite{GRS1,GRS2} follows from $\mathbb{D}=0$, which is obtained under some natural assumptions.

Gradient Ricci solitons in Theorem \ref{th:solitons} satisfy $\nabla f=h'(x^1)\partial_{{1'}}$. Then, a straightforward calculation shows that $\mathbb{D}$ is completely determined by 
$\mathbb{D}_{121}=-2h'(x^1)\partial_2\,{}^\affcon\Gamma_{11}^1(x^1,x^2)$,
the other components being zero. Hence it follows from the proof of Proposition \ref{prop:1} that the tensor field $\mathbb{D}$ vanishes if and only if the Ricci tensor $\rho^\affcon$ is symmetric. 
However Theorem \ref{th:sd-asd} shows that $(T^*\Sigma,g_{\affcon,\Phi,T})$ is never locally conformally flat. Further observe that, since the soliton is isotropic,  $\nabla f$ does not give rise to a local warped product decomposition unlike the Riemannian case.
\end{remark}

A special situation of Theorem \ref{th:solitons} occurs if $h\in\mathcal{C}^\infty(\Sigma)$ is a solution of the affine gradient Ricci soliton equation \eqref{eq:AGRS}.
The following result will be used in Section \ref{asd-grs} to construct examples of anti-self-dual steady gradient Ricci solitons (cf. Theorem \ref{prop:11}).

\begin{proposition}
\label{prop:2}
Let $(\Sigma,\affcon,T)$ be an affine surface equipped with a parallel nilpotent $(1,1)$-tensor field $T$. Then $(\Sigma,\affcon,T,h)$ is an affine gradient Ricci soliton with $dh(\operatorname{ker}T)=0$ if and only if $(T^*\Sigma,g_{\affcon,\widehat\Phi,T},f=h\circ\pi)$ is a Bach-flat gradient Ricci soliton for any symmetric $(0,2)$-tensor field $\Phi$.

Moreover, $(\Sigma,\affcon,T,h)$ is a non-flat affine gradient Ricci soliton with potential function $h$ satisfying $dh(\operatorname{ker}T)=0$ if and only if the recurrence one-form $\eta$ in \eqref{eq:recurrence-eta} satisfies $\widehat \eta=0$.
\end{proposition}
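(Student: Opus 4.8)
The plan is to read the first equivalence straight off Theorem~\ref{th:solitons} and to settle the second by collapsing the affine soliton equation to a single scalar equation in the coordinate transverse to $\operatorname{ker}T$. For the first statement I would apply Theorem~\ref{th:solitons} verbatim, but with the deformation tensor taken to be $\widehat\Phi$ rather than $\Phi$. That theorem characterizes $(T^*\Sigma,g_{\affcon,\widehat\Phi,T},f=h\circ\pi)$ as a Bach-flat gradient Ricci soliton by the two conditions $dh(\operatorname{ker}T)=0$ and $\widehat{\widehat\Phi}=-\operatorname{Hes}^\affcon_h-2\rho^\affcon_{sym}$. The key point is that $T$ is nilpotent with $T^2=0$, whence $\widehat{\widehat\Phi}(X,Y)=\Phi(T^2X,T^2Y)=0$ identically; the tensorial condition therefore reduces to $\operatorname{Hes}^\affcon_h+2\rho^\affcon_{sym}=0$, i.e. to the affine gradient Ricci soliton equation \eqref{eq:AGRS}, together with $dh(\operatorname{ker}T)=0$. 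As this no longer sees $\Phi$, the equivalence holds for every symmetric $\Phi$, and Bach-flatness comes for free from Theorem~\ref{th:Bach-zero}.

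For the second statement I would choose coordinates with $T\partial_1=\partial_2$, $T\partial_2=0$, so that \eqref{eq:Christ-T-parallel} holds and $\operatorname{ker}T=\operatorname{span}\{\partial_2\}$. Then $dh(\operatorname{ker}T)=0$ forces $h=h(x^1)$, and the same computation as in Theorem~\ref{th:solitons} reduces \eqref{eq:AGRS} to the single equation $h''-{}^\affcon\Gamma_{11}^1\,h'+2R=0$, with $R:=\rho^\affcon_{sym}(\partial_1,\partial_1)=\partial_2{}^\affcon\Gamma_{11}^2-\partial_1{}^\affcon\Gamma_{11}^1$. Reading off \eqref{eq:recurrence-eta} and using $T\partial_2=0$ gives $\widehat\eta=\eta(\partial_2)\,dx^1=(\partial_2\ln R)\,dx^1$, so that $\widehat\eta=0$ is exactly $\partial_2 R=0$. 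To finish, in the non-flat case I would pass to Wong's normal form of Remark~\ref{re:recurrente}, in which ${}^\affcon\Gamma_{11}^1=0$ and ${}^\affcon\Gamma_{11}^2$ is the only nonzero symbol; the scalar equation then becomes $h''(x^1)=-2R$, whose left-hand side depends on $x^1$ alone, so a non-constant (equivalently non-flat, $R\not\equiv 0$) potential exists if and only if $\partial_2 R=0$, that is $\partial_{22}{}^\affcon\Gamma_{11}^2=0$, which is $\widehat\eta=0$. Conversely, if $\widehat\eta=0$ one simply integrates $h''=-2R(x^1)$ twice to produce $h$.

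The step I expect to be the main obstacle is precisely this passage to the normal form. Differentiating the scalar equation in $x^2$ yields only $(\partial_2{}^\affcon\Gamma_{11}^1)\,h'=2\partial_2 R$, and since the skew-symmetric Ricci $\rho^\affcon_{sk}=\partial_2{}^\affcon\Gamma_{11}^1\,dx^1\wedge dx^2$ need not vanish, this identity does not by itself force $\partial_2 R=0$. Thus the crux of the argument is to show that the non-flat affine solitons under consideration have symmetric Ricci, so that Remark~\ref{re:recurrente} applies and the clean equivalence with $\widehat\eta=0$ goes through; without that reduction one can still solve the scalar equation along connections with $\rho^\affcon_{sk}\neq 0$, and the equivalence must be handled with care.
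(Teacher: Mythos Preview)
Your first paragraph is correct and is exactly the paper's argument: apply Theorem~\ref{th:solitons} with deformation tensor $\widehat\Phi$, note that $T^2=0$ forces $\widehat{\widehat\Phi}=0$ (equivalently $\widehat\Phi_{22}=0$ in adapted coordinates), and the tensorial constraint collapses to the affine soliton equation~\eqref{eq:AGRS}, independently of $\Phi$.

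Your hesitation about the second statement is well founded, and the obstacle you name is real. The paper's own proof writes the remaining scalar equation as $Q''+2\rho^\affcon(\partial_1,\partial_1)=0$ and then reads off the integrability condition $\partial_2\rho^\affcon(\partial_1,\partial_1)=0$, i.e.\ $\widehat\eta=0$. But in the adapted coordinates of~\eqref{eq:Christ-T-parallel} one has $\operatorname{Hes}^\affcon_h(\partial_1,\partial_1)=h''-{}^\affcon\Gamma_{11}^1\,h'$ (exactly as you wrote, and as the computation inside the proof of Theorem~\ref{th:solitons} confirms), so the correct scalar equation is $h''-{}^\affcon\Gamma_{11}^1\,h'+2R=0$. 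The paper has silently dropped the middle term; with it present, differentiation in $x^2$ yields only your relation $(\partial_2{}^\affcon\Gamma_{11}^1)\,h'=2\,\partial_2 R$, which does not force $\widehat\eta=0$ when $\rho^\affcon_{sk}\neq0$. Your suspicion that the equivalence can fail is correct: take ${}^\affcon\Gamma_{11}^1=x^2$, ${}^\affcon\Gamma_{11}^2=\tfrac12(x^2)^2$, with the remaining symbols fixed by~\eqref{eq:Christ-T-parallel}. Then $R=x^2$, so the surface is non-flat and $\widehat\eta\neq0$, yet $h=2x^1$ solves $\operatorname{Hes}^\affcon_h+2\rho^\affcon_{sym}=0$ with $dh(\operatorname{ker}T)=0$. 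Thus the second equivalence, as stated, requires an extra hypothesis; symmetric $\rho^\affcon$ suffices, and under that assumption your route through Remark~\ref{re:recurrente} goes through cleanly. You should not expect to close the gap in full generality, because it does not close.
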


\begin{proof}
Let $\Phi$ be an arbitrary symmetric $(0,2)$-tensor field on $\Sigma$ and let $\widehat{\Phi}(X,Y)=\Phi(TX,TY)$ be the associated tensor field induced by $T$. Since $\widehat{\Phi}_{22}=0$, \eqref{eq:th-solitons-1} shows that $(T^*\Sigma,g_{\affcon,\widehat\Phi,T},f=h\circ\pi)$ is a gradient Ricci soliton if and only if $(\Sigma,\affcon,T,h)$ is an affine gradient Ricci soliton with $dh(\operatorname{ker}T)=0$.

Next take local coordinates $(x^1,x^2)$ on $\Sigma$  so that $T\partial_1=\partial_2$, $T\partial_2=0$. 
Since the Christoffel symbols ${}^\affcon\Gamma_{ij}^k$  are given by \eqref{eq:Christ-T-parallel}, using the expression of $\rho^\affcon_{sym}$ in Proposition \ref{prop:1}, one has  $(\operatorname{Hes}^\affcon_h+2\rho^\affcon_{sym})(\partial_2,\partial_2)=\partial_{22}h$. Thus $h(x^1,x^2)=x^2 P(x^1)+Q(x^1)$ for some $P,Q\in\mathcal{C}^\infty(\Sigma)$. 
Hence $dh(\operatorname{ker}T)=0$ holds if and only if $P=0$.
Since $h(x^1,x^2)=Q(x^1)$ one has that  
$(\operatorname{Hes}^\affcon_h+2\rho^\affcon_{sym})(\partial_1,\partial_2)=0$, 
and the only remaining equation is 
$$ 
0=(\operatorname{Hes}^\affcon_h+2\rho^\affcon_{sym})(\partial_1,\partial_1)=Q''+2(\partial_2 {}^\affcon\Gamma_{11}^2-\partial_1{}^\affcon\Gamma_{11}^1)=Q''+2\rho^\affcon(\partial_1,\partial_1)\,.
$$
Therefore, the integrability condition becomes $\partial_2\rho^\affcon(\partial_1,\partial_1)=0$.
Hence, it follows from \eqref{eq:recurrence-eta} that $(\Sigma,\affcon,T,h)$ is an affine gradient Ricci soliton with $dh(\operatorname{ker}T)=0$ if and only if the symmetric part of the Ricci tensor $\rho^\affcon_{sym}$ is recurrent with recurrence one-form $\eta$ satisfying $\eta(\operatorname{ker}T)=0$.
\end{proof}

\section{Half conformally flat nilpotent Riemannian extensions}\label{se:W-half}

The existence of a null distribution $\mathcal{D}$ on a four-dimensional manifold $(M,g)$ of neutral signature defines a natural orientation on $M$: the one which, for any basis $u, v$ of $\mathcal{D}$, makes
the bivector $u\wedge v$ self-dual (see \cite{rm08}).
We consider on $T^*\Sigma$ the orientation which agrees with $\mathcal{D}=\operatorname{ker}\pi_*$, and thus self-duality and anti-self-duality are not interchangeable.
The following result shows that they are essentially different for nilpotent Riemannian extensions.

\begin{theorem}\label{th:sd-asd}
Let $(\Sigma,\affcon,T)$ be an affine surface equipped with a parallel nilpotent $(1,1)$-tensor field $T$. Then  
\begin{enumerate}
\item[(i)] $(T^*\Sigma,g_{\affcon,\Phi,T})$ is never self-dual for any deformation tensor field $\Phi$. 
\item[(ii)] If $(T^*\Sigma,g_{\affcon,\Phi,T})$ is anti-self-dual, then $D$ is either a flat connection or $(\Sigma,D)$ is recurrent with symmetric Ricci tensor of rank one. 

In the later case there exist local  coordinates  $(u^1,u^2)$ where the only non-zero Christoffel symbol is ${}^u\Gamma_{11}^2$ and the tensor field $T$ is given by $T\partial_{u^1}=\partial_{u^2}$, $T\partial_{u^2}=0$. 
Moreover, $(T^*\Sigma,g_{\affcon,\Phi,T})$ is anti-self-dual if and only if the symmetric $(0,2)$-tensor field $\Phi$ satisfies the equations:
\begin{equation}\label{eq:asd-pde}
\begin{array}{rcl}
\widehat{\affcon\Phi}&=&-2\widehat{\omega}\otimes\rho^\affcon\,,
\\
\noalign{\medskip}
0&=&
\frac{1}{2}\widehat{\Phi}\otimes\widehat{\Phi}(\partial_1,\partial_1,\partial_1,\partial_1)+2(\widehat{\Phi}\otimes\rho^\affcon)(\partial_1,\partial_1,\partial_1,\partial_1)\\
\noalign{\medskip}
&&
+\affcon^2\Phi(\partial_1,\partial_1; T\partial_1,T\partial_1)
+\affcon^2\Phi(T\partial_1,T\partial_1; \partial_1,\partial_1)\\
\noalign{\medskip}
&&
-2\,\affcon^2\Phi(\partial_1,T\partial_1; T\partial_1,\partial_1)\,,
\end{array}
\end{equation}
where $\widehat{\affcon\Phi}(X,Y,Z)=\affcon\Phi(TX,TY;TZ)$, $\omega$ is the recurrence one-form given by $\affcon\rho^\affcon=\omega\otimes\rho^\affcon$, and $\widehat{\omega}(X)$ $=$ $\omega(TX)$.
\end{enumerate}
\end{theorem}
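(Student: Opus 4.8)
The plan is to compute the self-dual and anti-self-dual parts of the Weyl tensor directly in the adapted coordinates $(x^1,x^2)$ of Proposition~\ref{prop:1}, where $T\partial_1=\partial_2$, $T\partial_2=0$ and the Christoffel symbols obey~\eqref{eq:Christ-T-parallel}. In these coordinates the metric \eqref{eq:Riemannian-extension-2} is a Walker metric whose only fibre-quadratic term is $g_{11}=x_{2'}^2-2x_{k'}{}^\affcon\Gamma_{11}^k+\Phi_{11}$, while $g_{12}=-2x_{2'}{}^\affcon\Gamma_{11}^1+\Phi_{12}$ and $g_{22}=\Phi_{22}$. First I would set up the standard null frame adapted to $\mathcal{D}=\ker\pi_*$, compute the curvature operator, subtract the Schouten part to obtain $\weyl$, and project onto $\Lambda^{\pm}$ using the Hodge star for the orientation in which $\partial_{1'}\wedge\partial_{2'}$ is self-dual. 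With this orientation, self-duality is the vanishing of $\weyl^-$ and anti-self-duality the vanishing of $\weyl^+$; since the two projections are genuinely different, the two assertions are treated separately.

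For assertion (i) the key point is that the contribution of the $\iota T\circ\iota T$ term is \emph{protected}: because $\affcon$ and $\Phi$ enter $g_{ij}$ only through terms that are at most linear in the fibre coordinates $x_{k'}$, one has $\partial_{2'}\partial_{2'}g_{11}=2$ identically while every other second fibre-derivative of the metric vanishes. I would show that this forces a distinguished component of $\weyl^-$ to equal a nonzero numerical constant, independent of $\affcon$ and $\Phi$; hence $\weyl^-\neq 0$ for every choice of data and $(T^*\Sigma,g_{\affcon,\Phi,T})$ is never self-dual. Equivalently, by Theorem~\ref{thm-7.2} a self-dual Walker metric must carry a fibre-quadratic term of the shape $\iota\operatorname{id}\circ\iota T'$, which always contains a factor $x_{i'}$ paired with the identity index and so cannot reproduce the pure $x_{2'}^2\, dx^1\circ dx^1$ produced by a nilpotent $T$; the direct computation makes this obstruction precise without appealing to local isometries.

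For assertion (ii) I would impose $\weyl^+=0$ and organise the resulting equations by their degree in the fibre coordinates $x_{1'},x_{2'}$. The top fibre-degree terms involve only the affine curvature and its first derivatives, and their vanishing forces $\partial_2{}^\affcon\Gamma_{11}^1=0$, i.e. $\rho^\affcon_{sk}=0$; by Proposition~\ref{prop:1} the Ricci tensor is then symmetric, of rank at most one and recurrent, which is the stated dichotomy (the flat case being $\rho^\affcon=0$). In the recurrent case Remark~\ref{re:recurrente} supplies the coordinates $(u^1,u^2)$ in which ${}^u\Gamma_{11}^2$ is the only nonzero Christoffel symbol and $T\partial_{u^1}=\partial_{u^2}$, $T\partial_{u^2}=0$. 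The remaining content of $\weyl^+=0$ then splits cleanly by fibre-degree into two tensorial identities: the fibre-linear part reproduces the harmonic-Weyl condition $\widehat{\affcon\Phi}=-2\widehat{\omega}\otimes\rho^\affcon$ already met in Remark~\ref{prop:Cotton} (with $\omega$ the recurrence one-form of \eqref{eq:recurrence-ricci}, which coincides with $\eta$ once the Ricci tensor is symmetric), while the fibre-constant part repackages into the second equation of \eqref{eq:asd-pde}, the symmetrised combination $\affcon^2\Phi(\partial_1,\partial_1;T\partial_1,T\partial_1)+\affcon^2\Phi(T\partial_1,T\partial_1;\partial_1,\partial_1)-2\,\affcon^2\Phi(\partial_1,T\partial_1;T\partial_1,\partial_1)$ being exactly what the second fibre-derivatives of $\weyl^+$ produce.

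The curvature computation itself is long but mechanical, so I expect the genuine obstacle to be the bookkeeping in the two identities of \eqref{eq:asd-pde}: recognising the raw coordinate output of $\weyl^+=0$ as the invariant combinations $\widehat{\affcon\Phi}$, $\widehat{\omega}\otimes\rho^\affcon$, $\widehat{\Phi}\otimes\widehat{\Phi}$, $\widehat{\Phi}\otimes\rho^\affcon$ and $\affcon^2\Phi$ of the statement, and verifying that the fibre-degree splitting leaves no surviving cross terms, so that the dichotomy, the first PDE and the second PDE are genuinely independent conditions.
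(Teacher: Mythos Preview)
Your plan is essentially the paper's own approach: compute $W^\pm$ in the adapted coordinates of Proposition~\ref{prop:1} and read off the conditions. The paper economises by quoting the closed matrix forms of $W^\pm$ for a general Walker metric from \cite{DR-GR-VL-06} rather than recomputing the curvature from scratch; this gives $W^-$ directly as the constant nonzero nilpotent matrix~\eqref{Weyl-self-dual-nilpotent} (proving~(i) in one line), and reduces anti-self-duality to the vanishing of the three scalars $\tau$, $W^+_{11}$, $W^+_{12}$ that determine $W^+$ via~\eqref{W+- matrix form}.

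One bookkeeping point deserves correction. You attribute the constraint $\partial_2{}^\affcon\Gamma_{11}^1=0$ (equivalently $\rho^\affcon_{sk}=0$) to the ``top fibre-degree terms'' of $W^+$. In the paper's parametrisation this is not how it arises: the scalar $W^+_{12}$ turns out to be \emph{fibre-constant}, equal to $-2\,\partial_2{}^\affcon\Gamma_{11}^1$, and its vanishing is what forces the Ricci tensor to be symmetric. Only \emph{after} this is imposed and one passes to the Wong coordinates $(u^1,u^2)$ of Remark~\ref{re:recurrente} does the remaining component $W^+_{11}$ become a first-degree polynomial in $u_{2'}$, whose linear and constant coefficients yield, respectively, the first and second equations in~\eqref{eq:asd-pde}. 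So the relevant splitting is first by matrix entry of $W^+$ and then by fibre-degree within $W^+_{11}$, not a single global fibre-degree filtration. This does not invalidate your strategy, but you should expect the constraint $\rho^\affcon_{sk}=0$ to emerge from a separate component rather than from a leading-order term.
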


\begin{proof}
A direct computation using the expression of the anti-self-dual curvature operator of any four-dimensional Walker metric obtained in \cite{DR-GR-VL-06} shows that, for any nilpotent Riemannian extension $g_{\affcon,\Phi,T}$, $W^-$  takes the form
\begin{equation}\label{Weyl-self-dual-nilpotent}
	W^- =\frac{1}{2}
	\left(
	\begin{array}{rrr}
		-1 & 0 & 1
		\\
		0 & 0 & 0
		\\
		-1 & 0 & 1
	\end{array}
	\right)\,,
\end{equation}
thus showing that the anti-self-dual Weyl curvature opertor $W^-$ is nilpotent and hence $(T^*\Sigma,g_{\affcon,\Phi,T})$ is never self-dual, which proves (i).

Next we show (ii). 
Let $(M,g)$ be a four-dimensional Walker metric \eqref{eqn-2.a} and set the metric components
$g_{11}=a$, $g_{12}=c$ and $g_{22}=b$, where $g_{ij}$ are functions of the Walker coordinates
$(x^1,x^2,x_{1'},x_{2'})$. Then the self-dual Weyl curvature operator takes the form (see~\cite{DR-GR-VL-06})
\begin{equation}\label{W+- matrix form}
    W^+ =
    \left(
    \begin{array}{ccc}
         W_{11}^+ &
         W_{12}^+ &
         W^+_{11} + \frac{\tau}{12}
        \\[0.1in]
        - W_{12}^+&
        \frac{\tau}{6}&
        - W^+_{12}
        \\[0.1in]
        - W^+_{11} - \frac{\tau}{12} &
        - W^+_{12} &
        - W^+_{11} - \frac{\tau}{6}
    \end{array}
    \right),
\end{equation}
where
\begin{equation}\label{W+ 11}
\,
\begin{array}{l}
   W^+_{11} = \frac{1}{12} (
   6 c a_1 b_2 - 6 a_1 b_{1'} - 6 b a_1 c_2  + 12 a_1 c_{2'} - 6 c a_2 b_1 + 6 a_2 b_{2'} 
   \\[0.1in]
   \phantom{W^+_{11} = }
  + 6 b a_2 c_1 +  6 a_{1'} b_1 - 6 a_{2'} b_2 - 12 a_{2'} c_1  + 6 a b_1 c_2 - 6 a b_2 c_1 

   \\[0.1in]
   \phantom{W^+_{11} =  }
  + 12 b_2 c_{1'} - 12 b_{1'} c_2 - a_{11} - 12 c^2 a_{11} - 12 b c a_{12} + 24 c a_{12'} 
   \\[0.1in]
   \phantom{W^+_{11} =  }
   - 3 b^2 a_{22} + 12 b a_{22'} -12 a_{2'2'} - 3 a^2 b_{11} + 12 a b_{11'} - b_{22} 

   \\[0.1in]
   \phantom{W^+_{11} =  }
   - 12 b_{1'1'} + 12 a c c_{11} - 2 c_{12} + 6 a b c_{12} - 24 c c_{11'} - 12 a c_{12'} 
   
    \\[0.1in]
      \phantom{W^+_{11} =  }
   - 12 b c_{21'}  + 24 c_{1'2'}
   ),
\end{array}
\end{equation}
and
\begin{equation}\label{W+ 12}
\begin{array}{l}
   W^+_{12} = \frac{1}{4}  (
   - 2 c a_{11} - b a_{12} + 2 a_{12'} + a b_{12} - 2 b_{21'} + a c_{11} - 2 c c_{12}
   \\[0.1in]
   \phantom{W^+_{12} = \frac{1}{4}  (  }
    - 2 c_{11'} - b c_{22} + 2 c_{22'}
   ).
\end{array}
\end{equation}

Since any anti-self-dual metric is Bach-flat, we proceed as in the proof of Theorem \ref{th:Bach-zero} considering local coordinates $(x^1,x^2)$ on the surface $\Sigma$ such that $T$ is determined  by $T\partial_1=\partial_2$ and $T\partial_2=0$. Since $T$ is parallel, the Christoffel symbols must satisfy \eqref{eq:Christ-T-parallel}, i.e., 
\[
 {}^\affcon\Gamma_{12}^1=0,
	\qquad
	{}^\affcon\Gamma_{12}^2={}^\affcon\Gamma_{11}^1,
	\qquad
	{}^\affcon\Gamma_{22}^1=0,
	\qquad
{}^\affcon\Gamma_{22}^2=0\,.
\]
Next, we analyze the self-dual Weyl curvature operator, which is completely determined by the scalar curvature and its components $W_{11}^+$ and $W_{12}^+$ already described in equations \eqref{W+ 11} and \eqref{W+ 12}. 
The scalar curvature is zero by Theorem \ref{re:Ricci}, and 
$W_{12}^+ = -2\partial_2{}^\affcon\Gamma_{11}^1$, from where it follows that the Ricci tensor $\rho^\affcon$ is symmetric of rank one and recurrent (see Remark \ref{re:recurrente}).
Take local coordinates $(u^1,u^2)$ as in Remark \ref{re:recurrente} so that the only non-zero Christoffel symbol is ${}^u\Gamma_{11}^2$ and 
$T\partial_{u^1}=\partial_{u^2}$, $T\partial_{u^2}=0$.
Finally, we compute the component $W_{11}^+$ given by \eqref{W+ 11} in the coordinates $(u^1,u^2,u_{1^\prime},u_{2^\prime})$ of $T^*\Sigma$, obtaining
\[
\begin{array}{lcl}
W_{11}^+ &=& 
(\partial_2\Phi_{22}+2\partial_{22}{}^u\Gamma_{11}^2)\, u_{2^\prime}
-\frac{1}{2}(\Phi_{22})^2-2\Phi_{22}\partial_2{}^u\Gamma_{11}^2
-\partial_2\Phi_{22}{}^u\Gamma_{11}^2
\\
\noalign{\medskip}
&&
+2\partial_{12}\Phi_{12}-\partial_{22}\Phi_{11}-\partial_{11}\Phi_{22}\,.
\end{array}
\]
Thus $(T^*\Sigma,g_{\affcon,\Phi,T})$ is anti-self-dual if and only if 
\[
\begin{array}{l}
\partial_2\Phi_{22}+2\partial_{22}{}^u\Gamma_{11}^2=0\,,
\\
\noalign{\medskip}
\frac{1}{2}(\Phi_{22})^2+2\Phi_{22}\partial_2{}^u\Gamma_{11}^2
+\partial_2\Phi_{22}{}^u\Gamma_{11}^2
=2\partial_{12}\Phi_{12}-\partial_{22}\Phi_{11}-\partial_{11}\Phi_{22}\,,
\end{array}
\]
from where \eqref{eq:asd-pde} follows. 
\end{proof}

\subsection{Anti-self-dual gradient Ricci solitons}\label{asd-grs}
 Half conformally flat gradient Ricci solitons are locally conformally flat in the Riemannian setting. The same result holds true in neutral signature for non-isotropic ($\|\nabla f\|^2\neq0$) gradient Ricci solitons (see \cite{BVGR, chen-wang}).
 Isotropic half conformally flat gradient Ricci solitons are not necessarily locally conformally flat, and they are realized on Walker manifolds \cite{BVGR}. However, although the self-dual ones were already described in Theorem \ref{th:BVGR1}, no explicit examples of strictly anti-self-dual gradient Ricci solitons were previously reported. 
In order to construct the desired examples we firstly specialize Theorem \ref{th:sd-asd} to get the following anti-self-dual nilpotent Riemannian extensions.

\begin{proposition}\label{prop:4}
Let $(\Sigma,\affcon,T,\Phi)$ be an affine surface equipped with a parallel nilpotent $(1,1)$-tensor field $T$ and a parallel symmetric $(0,2)$-tensor field $\Phi$. If $\rho^\affcon$ is symmetric 
then $(T^*\Sigma,g_{\affcon,\Phi,T})$ is anti-self-dual if and only if $\widehat\omega=0$ and $\widehat{\Phi}=0$, where $\omega$ is the recurrence one-form given by \eqref{eq:recurrence-ricci}.
\end{proposition}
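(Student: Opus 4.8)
The plan is to specialize Theorem \ref{th:sd-asd}(ii) to the situation where $\Phi$ is parallel and $\rho^\affcon$ is already symmetric, then show that the two governing equations in \eqref{eq:asd-pde} collapse into the stated conditions $\widehat\omega=0$ and $\widehat\Phi=0$. First I would invoke Theorem \ref{th:sd-asd}(ii): since $\rho^\affcon$ is symmetric, we are either in the flat case or in the recurrent rank-one case, so we may take coordinates $(u^1,u^2)$ where the only non-zero Christoffel symbol is ${}^u\Gamma_{11}^2$ and $T\partial_{u^1}=\partial_{u^2}$, $T\partial_{u^2}=0$. In these coordinates $\rho^\affcon=\rho^\affcon(\partial_1,\partial_1)\,du^1\otimes du^1$ with $\rho^\affcon(\partial_1,\partial_1)=\partial_2{}^u\Gamma_{11}^2$, and the recurrence one-form $\omega$ is given by \eqref{eq:recurrence-ricci}.

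Next I would exploit parallelism of $\Phi$ to simplify the anti-self-duality equations drastically. Because $\affcon\Phi=0$, the first equation $\widehat{\affcon\Phi}=-2\widehat\omega\otimes\rho^\affcon$ forces $\widehat\omega\otimes\rho^\affcon=0$; since $\rho^\affcon$ is assumed nontrivial in the non-flat case, this immediately yields $\widehat\omega=0$ (and $\widehat\omega=0$ holds trivially in the flat case as $\omega$ is then irrelevant). For the second equation, parallelism kills every second-derivative term $\affcon^2\Phi$, so the displayed quadratic expression in \eqref{eq:asd-pde} reduces to $\tfrac12\,\widehat\Phi\otimes\widehat\Phi(\partial_1,\partial_1,\partial_1,\partial_1)+2(\widehat\Phi\otimes\rho^\affcon)(\partial_1,\partial_1,\partial_1,\partial_1)=0$. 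Writing $\widehat\Phi=\Phi_{22}\,du^1\otimes du^1$ (as recorded before Theorem \ref{re:Ricci}, where $\widehat\Phi_{11}=\Phi_{22}$), this becomes a scalar relation $\tfrac12\Phi_{22}^2+2\Phi_{22}\,\partial_2{}^u\Gamma_{11}^2=0$, i.e. $\Phi_{22}\bigl(\tfrac12\Phi_{22}+2\rho^\affcon(\partial_1,\partial_1)\bigr)=0$.

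The key remaining point is to rule out the branch $\tfrac12\Phi_{22}=-2\rho^\affcon(\partial_1,\partial_1)$ and conclude that $\Phi_{22}=0$, equivalently $\widehat\Phi=0$. Here I would use that both $\Phi$ and the recurrence structure constrain the $u^2$-dependence: parallelism of $\Phi$ forces $\partial_2\Phi_{22}=0$ (this is exactly the first anti-self-duality equation $\partial_2\Phi_{22}+2\partial_{22}{}^u\Gamma_{11}^2=0$ combined with the consequence of $\widehat\omega=0$, which by Remark \ref{re:recurrente} is equivalent to $\partial_{22}{}^u\Gamma_{11}^2=0$). Thus $\Phi_{22}$ is independent of $u^2$, whereas $\rho^\affcon(\partial_1,\partial_1)=\partial_2{}^u\Gamma_{11}^2$ depends genuinely on $u^2$ (since $\partial_2{}^u\Gamma_{11}^2\neq0$ is precisely the non-degeneracy of the rank-one Ricci tensor). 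Hence the alternative branch would equate a $u^2$-independent function with a $u^2$-dependent one, which is impossible unless both vanish; this forces $\Phi_{22}=0$. The converse is immediate: if $\widehat\omega=0$ and $\widehat\Phi=0$ then both equations in \eqref{eq:asd-pde} are satisfied, so $(T^*\Sigma,g_{\affcon,\Phi,T})$ is anti-self-dual.

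The main obstacle I anticipate is the careful bookkeeping in the elimination step of the third paragraph: one must be sure that $\widehat\omega=0$ genuinely delivers $\partial_{22}{}^u\Gamma_{11}^2=0$ via Remark \ref{re:recurrente}, and that the $u^2$-dependence argument is airtight rather than relying on a nonvanishing assumption one is not entitled to. In the flat case everything degenerates ($\rho^\affcon=0$, $\widehat\Phi$ need only satisfy the trivial identity), so the statement holds vacuously there; the substantive content is entirely the recurrent rank-one branch.
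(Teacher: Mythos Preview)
Your overall strategy---specializing Theorem~\ref{th:sd-asd}(ii) under the extra hypotheses $\affcon\Phi=0$ and $\rho^\affcon_{sk}=0$---is exactly what the paper does, and your reduction of the first equation to $\widehat\omega=0$ and of the second to the scalar relation $\Phi_{22}\bigl(\tfrac12\Phi_{22}+2\rho^\affcon(\partial_1,\partial_1)\bigr)=0$ is correct. The problem lies in the elimination step.

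Your claim that $\rho^\affcon(\partial_1,\partial_1)=\partial_2{}^u\Gamma_{11}^2$ ``depends genuinely on $u^2$'' is a non sequitur: $\partial_2{}^u\Gamma_{11}^2\neq 0$ says only that the Ricci tensor is non-zero, not that it varies in $u^2$. In fact, once you have established $\widehat\omega=0$ (equivalently $\partial_{22}{}^u\Gamma_{11}^2=0$ by Remark~\ref{re:recurrente}), the function $\partial_2{}^u\Gamma_{11}^2$ depends on $u^1$ alone, so both sides of the putative ``bad branch'' are $u^2$-independent and your contradiction evaporates. The correct way to dispose of that branch is to use \emph{more} of the parallelism of $\Phi$ than just $\partial_2\Phi_{22}=0$. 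In the adapted coordinates one has $(\affcon_{\partial_1}\Phi)_{22}=\partial_1\Phi_{22}=0$ as well, so $\Phi_{22}$ is in fact constant; and $(\affcon_{\partial_1}\Phi)_{12}=\partial_1\Phi_{12}-{}^u\Gamma_{11}^2\Phi_{22}=0$ together with $(\affcon_{\partial_2}\Phi)_{12}=\partial_2\Phi_{12}=0$ gives, upon differentiating the former in $u^2$, the identity $\rho^\affcon(\partial_1,\partial_1)\,\Phi_{22}=0$. Hence $\Phi_{22}=0$ directly from $\affcon\Phi=0$ in the non-flat case, and the second equation of~\eqref{eq:asd-pde} is then automatically satisfied.

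Your treatment of the flat case is also not quite right: it is not vacuous. With $\affcon$ flat, $\Phi$ parallel means the $\Phi_{ij}$ are constant, and the second coordinate equation in the proof of Theorem~\ref{th:sd-asd} reduces to $\tfrac12\Phi_{22}^2=0$, so $\widehat\Phi=0$ is still a genuine constraint (and, conversely, is sufficient). The paper records this as a separate one-line computation.
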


\begin{proof}
If the Ricci tensor $\rho^\affcon$ is symmetric of rank one and $\Phi$ is parallel, then the equations in Theorem \ref{th:sd-asd} reduce to $\widehat{\omega}=0$ and $\widehat{\Phi}=0$, which proves the result. Moreover if $(\Sigma,\affcon)$ is a flat surface, a straightforward calculation shows that anti-self-duality is equivalent to $\widehat{\Phi}=0$, being $\Phi$ a parallel tensor.
\end{proof}

The condition $\widehat{\Phi}=0$ in previous proposition restricts the consideration of Ricci solitons on $T^*\Sigma$ to those originated by an affine gradient Ricci soliton on $\Sigma$ (see Theorem \ref{th:solitons}). An application of Propositions \ref{prop:2} now gives the desired examples of strictly anti-self-dual gradient Ricci solitons. 

\begin{theorem}
\label{prop:11}
Let $(\Sigma,\affcon,T,\Phi)$ be an affine surface equipped with a parallel nilpotent $(1,1)$-tensor field $T$ and a parallel symmetric $(0,2)$-tensor field $\Phi$.
If the Ricci tensor $\rho^\affcon$ is symmetric, then any  $h\in\mathcal{C}^\infty(\Sigma)$ is an affine gradient Ricci soliton with $0=dh(\operatorname{ker}T)$ if and only if the nilpotent Riemannian extension $(T^*\Sigma,g_{\affcon,\widehat{\Phi},T},f=h\circ\pi)$ is an anti-self-dual gradient steady Ricci soliton which is not locally conformally flat.

Moreover, there exist local coordinates $(u^1,u^2)$ on $\Sigma$ so that the only non-zero Christoffel symbol is given by ${}^u\Gamma_{11}^2=\alpha(u^1)+u^2\beta(u^1)$ and the potential function $h(u^1)$ is determined by $h''(u^1)=-2\beta(u^1)$, for any $\alpha,\beta\in\mathcal{C}^\infty(\Sigma)$.
\end{theorem}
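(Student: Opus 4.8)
The plan is to assemble the statement almost entirely from the machinery already in place: the soliton characterization of Theorem \ref{th:solitons} and Proposition \ref{prop:2}, the anti-self-duality criterion of Theorem \ref{th:sd-asd} and Proposition \ref{prop:4}, and the normal form of Remark \ref{re:recurrente}. The only genuinely new input consists of two elementary identities about the hat-operation $\widehat{(\cdot)}$ induced by $T$, so I would open by recording these.

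\medskip
\noindent\textbf{The two hat-identities.} Working in coordinates with $T\partial_1=\partial_2$, $T\partial_2=0$ one has $T^2=0$, whence $\widehat{\widehat\Phi}(X,Y)=\Phi(T^2X,T^2Y)=0$ identically. Moreover, since $T$ and $\Phi$ are both parallel, $\widehat\Phi(X,Y)=\Phi(TX,TY)$ is parallel as well. Thus $\widehat\Phi$ satisfies the hypotheses of Proposition \ref{prop:4} (a parallel symmetric $(0,2)$-tensor used as deformation), and the anti-self-duality criterion there collapses to the single condition $\widehat\omega=0$ on the recurrence one-form of $\rho^\affcon$, because the constraint $\widehat{\widehat\Phi}=0$ is automatic.

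\medskip
\noindent\textbf{Forward direction.} Assume $h$ is an affine gradient Ricci soliton with $dh(\operatorname{ker}T)=0$. By the first part of Proposition \ref{prop:2}, $(T^*\Sigma,g_{\affcon,\widehat\Phi,T},f=h\circ\pi)$ is a Bach-flat gradient Ricci soliton, and Theorem \ref{th:solitons} makes it steady and isotropic. In the non-flat case $\rho^\affcon=\rho^\affcon_{sym}$ is recurrent of rank one (Proposition \ref{prop:1}), and the second part of Proposition \ref{prop:2} yields $\widehat\eta=0$; since $\rho^\affcon$ is symmetric, its recurrence one-form $\eta$ coincides with the $\omega$ of Remark \ref{re:recurrente}, so $\widehat\omega=0$. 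Together with $\widehat{\widehat\Phi}=0$, Proposition \ref{prop:4} gives anti-self-duality (the flat case is identical, the condition $\widehat\omega=0$ being vacuous). Finally, Theorem \ref{th:sd-asd}(i) shows the extension is never self-dual, hence never locally conformally flat.

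\medskip
\noindent\textbf{Converse, ``Moreover'', and the main difficulty.} Conversely, if the extension is an anti-self-dual (steady) gradient Ricci soliton, then it is Bach-flat, since every anti-self-dual metric is Bach-flat; the first part of Proposition \ref{prop:2} then returns at once that $h$ is an affine gradient Ricci soliton with $dh(\operatorname{ker}T)=0$, so this direction needs nothing further. For the explicit description I would invoke Remark \ref{re:recurrente}: in the recurrent rank-one case there are coordinates $(u^1,u^2)$ with ${}^u\Gamma_{11}^2$ the only non-zero Christoffel symbol and $T\partial_{u^1}=\partial_{u^2}$, $T\partial_{u^2}=0$; the condition $\widehat\omega=0$ is equivalent to $\partial_{22}{}^u\Gamma_{11}^2=0$, i.e. ${}^u\Gamma_{11}^2=\alpha(u^1)+u^2\beta(u^1)$. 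Since $dh(\operatorname{ker}T)=0$ forces $h=h(u^1)$, the affine soliton equation reduces, exactly as in the proof of Proposition \ref{prop:2}, to $h''+2\,\partial_{u^2}{}^u\Gamma_{11}^2=0$, that is $h''(u^1)=-2\beta(u^1)$, which is solvable for arbitrary $\alpha,\beta$. The only real obstacle is the compatibility bookkeeping of this last paragraph's first half: one must confirm that $\widehat\Phi$ is parallel and that $\widehat{\widehat\Phi}$ vanishes, so that Proposition \ref{prop:4}'s criterion becomes precisely $\widehat\omega=0$, and then match this with the affine soliton equation through Proposition \ref{prop:2}. No new curvature computation is required, as the heavy PDE analysis already resides in Theorem \ref{th:sd-asd}.
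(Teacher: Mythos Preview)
Your proposal is correct and follows essentially the same route as the paper: invoke Proposition~\ref{prop:2} for the soliton, Proposition~\ref{prop:4} for anti-self-duality, and Remark~\ref{re:recurrente} for the explicit normal form and the integrability condition $\partial_{22}{}^u\Gamma_{11}^2=0$. Your write-up is in fact more careful than the paper's terse proof: you explicitly verify that $\widehat\Phi$ is parallel and that $\widehat{\widehat\Phi}=0$ so that Proposition~\ref{prop:4} applies with the right deformation tensor, and you spell out the ``not locally conformally flat'' clause via Theorem~\ref{th:sd-asd}(i) and the converse direction via Bach-flatness of anti-self-dual metrics---points the paper leaves implicit.
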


\begin{proof}
	First of all, note that $(T^*\Sigma,g_{\affcon,\widehat{\Phi},T},f=h\circ\pi)$ 
	is a gradient Ricci soliton by Proposition \ref{prop:2}. Anti-self-duality follows from Proposition \ref{prop:4}.
Next, since the Ricci tensor is symmetric, Remark \ref{re:recurrente} shows that it is recurrent. Take local coordinates $(u^1,u^2)$ on $\Sigma$ as in Remark \ref{re:recurrente}. 
Then $(\Sigma,\affcon,T)$ admits an affine gradient Ricci soliton with potential function $h$ such that $dh(\operatorname{ker}T)=0$ if and only if $h(u^1,u^2)=S(u^1)$ for some $S\in\mathcal{C}^\infty(\Sigma)$ and 
$(\operatorname{Hes}^\affcon_h+2\rho^\affcon_{sym})(\partial_1,\partial_1)=\partial_{11}h+2\partial_2{}^u\Gamma_{11}^2=0$ becomes
$S''(x^1)=-2\partial_{2}{}^u\Gamma_{11}^2$. Hence the integrability condition $\partial_{22}{}^u\Gamma_{11}^2=0$ is equivalent to ${}^u\Gamma_{11}^2=\alpha(u^1)+u^2\beta(u^1)$ for some $\alpha,\beta\in\mathcal{C}^\infty(\Sigma)$ and $h''(u^1)=-2\beta(u^1)$.
\end{proof}

\section{Conformally Einstein nilpotent Riemannian extensions}\label{se:CE}

A pseudo-Riemannian manifold $(M^n,g)$ is said to be \emph{(locally) conformally Einstein} if every point $p\in M$
has an open neighborhood $\mathcal{U}$ and a positive smooth function $\varphi$ defined on $\mathcal{U}$ such that $(\mathcal{U},\bar{g} =
\varphi^{-2} g)$ is Einstein.
Brinkmann \cite{Brinkmann24} showed that a manifold is conformally Einstein if and only if
the equation
\begin{equation}\label{eq:Conformally Einstein}
(n-2)\operatorname{Hes}_\varphi +\varphi\,\rho- \frac{1}{n}\{(n-2)\Delta\varphi+\varphi\,\tau\}g=0
\end{equation}
has a positive solution. Besides its apparent simplicity, the integration of the conformally Einstein equation is surprisingly difficult (see \cite{Kuhnel-Rademacher} and references therein for more information).
It was shown in \cite{GN, Kozameh-Newman-Tod85} that any four-dimensional conformally Einstein manifold satisfies  
\begin{equation}\label{eq:18-b}
(i) \quad \mathfrak{C}+W(\cdot,\cdot,\cdot,\nabla \sigma)=0,\qquad \qquad
(ii) \quad\bach=0.
\end{equation}
where the conformal metric is given by $\bar{g}=e^{2\sigma}g$. 

Conditions (i)-(ii) above are also sufficient to be conformally Einstein if $(M,g)$ is \emph{weakly-generic} (i.e., the Weyl tensor viewed as a map $TM\rightarrow\bigotimes^3 TM$ is injective). 
Since nilpotent Riemannian extensions are not weakly-generic (see the expression of $W^-$ in the proof of Theorem \ref{th:sd-asd}), we will analyze the conformally Einstein equation \eqref{eq:Conformally Einstein}, seeking for solutions on  nilpotent Riemannian extensions $(T^*\Sigma,g_{\affcon,\Phi,T})$.

\begin{theorem}\label{th:conformally-Einstein}
Let $(\Sigma,\affcon,T)$ be a torsion free affine surface equipped with a parallel nilpotent $(1,1)$-tensor field $T$. Then  any solution of \eqref{eq:Conformally Einstein} is of the form
$\varphi=\iota X+\phi\circ\pi$ for some vector field $X$ on $\Sigma$ such that $X\in\operatorname{ker}T$ and $\operatorname{tr}(\affcon X)=0$.

Moreover
$(T^*\Sigma,g_{\affcon,\Phi,T})$ is conformally Einstein if and only if one of the following holds:
\begin{itemize}
\item[(i)]
The conformally Einstein equation \eqref{eq:Conformally Einstein} admits a solution $\varphi=\phi\circ\pi$ for some $\phi\in\mathcal{C}^\infty(\Sigma)$ with $d\phi(\operatorname{ker}T)=0$, and the deformation tensor $\Phi$ is determined by
$\phi\,\widehat{\Phi}+2(\operatorname{Hes}^\affcon_\phi+\phi\,\rho^\affcon_{sym})=0$.
\item[(ii)]
The conformally Einstein equation \eqref{eq:Conformally Einstein} admits a solution $\varphi=\iota X+\phi\circ\pi$ for some $\phi\in\mathcal{C}^\infty(\Sigma)$ and some non-zero vector field $X$ on $\Sigma$ such that $X\in\operatorname{ker}T$ and $\operatorname{tr}(\affcon X)=0$. 

In this case, the Ricci tensor $\rho^\affcon$ is symmetric of rank one and recurrent. 
Moreover there are local coordinates $(u^1,u^2)$ on $\Sigma$ so that   $\varphi(u^1,u^2$, $u_{1^\prime},u_{2^\prime})=\kappa u_{2^\prime}+\phi(u^1,u^2)$ is a solution of \eqref{eq:Conformally Einstein} if and only~if
$$
\begin{array}{l}
d\phi(T\partial_1)=\tfrac{\kappa}{2} \Phi(T\partial_1,T\partial_1)\,,\\
\noalign{\medskip}
\operatorname{Hes}^D_\phi(\partial_1,\partial_1)+\phi\,\rho^\affcon(\partial_1,\partial_1)
=-\frac{1}{2}(\phi+2\kappa\,{}^u\Gamma_{11}^2)\Phi(T\partial_1,T\partial_1)
\\
\noalign{\medskip}
\phantom{2\operatorname{Hes}^D_\phi(\partial_1,\partial_1)}
+\tfrac{\kappa}{2}\left\{
2(\affcon_{\partial_1}\Phi)(T\partial_1,\partial_1)-(\affcon_{T\partial_1}\Phi)(\partial_1,\partial_1)
\right\}\,.
\end{array}
$$
\end{itemize}
\end{theorem}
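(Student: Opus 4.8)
The plan is to reduce \eqref{eq:Conformally Einstein} and then decompose the resulting tensor equation component by component in coordinates adapted to $T$. In dimension $n=4$ the equation reads $2\operatorname{Hes}_\varphi+\varphi\,\rho-\frac14(2\Delta\varphi+\varphi\,\tau)g=0$, and by Theorem~\ref{re:Ricci} the nilpotent Riemannian extension satisfies $\tau=0$ with $\rho$ carried by the single slot $\rho(\partial_1,\partial_1)=\Phi(\partial_2,\partial_2)+2\rho^\affcon_{sym}(\partial_1,\partial_1)$ in coordinates where $T\partial_1=\partial_2$, $T\partial_2=0$. Thus I would work with $2\operatorname{Hes}_\varphi+\varphi\,\rho-\frac12(\Delta\varphi)g=0$. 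The first concrete step is to compute the Levi-Civita Christoffel symbols of $g_{\affcon,\Phi,T}$ from \eqref{eq:Christ-T-parallel} and to record the Hessian components $\operatorname{Hes}_\varphi(\partial_a,\partial_b)$; the key structural feature is that several of these inherit an explicit $x_{2'}$-dependence from the coefficient $g_{11}$, and this fiber-dependence is what drives the whole argument.

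Next I would run through the blocks of the equation. The fiber-fiber components reduce to $\operatorname{Hes}_\varphi(\partial_{i'},\partial_{j'})=\partial_{i'j'}\varphi=0$ (since $\rho$ and $g$ vanish there), so $\varphi$ is affine along the fibers: $\varphi=\iota X+\phi\circ\pi$ with $X=A\,\partial_1+B\,\partial_2$. Using the Walker inverse metric one finds the clean identity $\Delta\varphi=2\operatorname{tr}(\affcon X)$. The mixed components then pin down $X$: the equation $\mathcal{E}(\partial_2,\partial_{1'})=0$ gives $\partial_2A=0$; the coefficient of $x_{2'}$ in $\mathcal{E}(\partial_1,\partial_{2'})=0$ forces $A=0$, so $X\in\operatorname{ker}T$; and the diagonal relation $\operatorname{Hes}_\varphi(\partial_1,\partial_{1'})=\operatorname{Hes}_\varphi(\partial_2,\partial_{2'})$ yields $\partial_2B=0$, i.e. $\operatorname{tr}(\affcon X)=0$ (equivalently $\Delta\varphi=0$). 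This establishes the first assertion, while the $x_{2'}$-free part of $\mathcal{E}(\partial_1,\partial_{2'})=0$ becomes the ODE $\partial_1B+{}^\affcon\Gamma_{11}^1\,B=0$ for $B=B(x^1)$.

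Finally I would split according to $X$. If $X=0$ (case (i)) then $\varphi=\phi\circ\pi$ and $\Delta\varphi=0$, so only the base components survive; the coefficient of $x_{2'}$ in $\mathcal{E}(\partial_1,\partial_1)=0$ forces $\partial_2\phi=0$, i.e. $d\phi(\operatorname{ker}T)=0$, after which $\mathcal{E}(\partial_2,\partial_2)=\mathcal{E}(\partial_1,\partial_2)=0$ hold automatically and the $x_{2'}$-free part of $\mathcal{E}(\partial_1,\partial_1)=0$ is exactly $\phi\,\widehat{\Phi}+2(\operatorname{Hes}^\affcon_\phi+\phi\,\rho^\affcon_{sym})=0$. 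If $X\neq0$ (case (ii)), then $B$ is a nowhere-vanishing solution of $\partial_1B+{}^\affcon\Gamma_{11}^1\,B=0$, which makes ${}^\affcon\Gamma_{11}^1$ depend on $x^1$ only, hence $\partial_2{}^\affcon\Gamma_{11}^1=0$; by Remark~\ref{re:recurrente} the Ricci tensor $\rho^\affcon$ is symmetric of rank one and recurrent, and passing to the Wong coordinates $(u^1,u^2)$ (where ${}^u\Gamma_{11}^1=0$) turns $B$ into a constant $\kappa$, so $\varphi=\kappa\,u_{2'}+\phi$. The base-base components then deliver the two displayed identities: the $u_{2'}$-coefficient of $\mathcal{E}(\partial_1,\partial_1)=0$ gives the first, its $u_{2'}$-independent part gives the second, and $\mathcal{E}(\partial_2,\partial_2)=0$, $\mathcal{E}(\partial_1,\partial_2)=0$ reduce to $u^2$- and $u^1$-derivatives of the first. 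The converse is immediate in both cases, since substituting the stated conditions makes every component vanish, and positivity of $\varphi$ is a purely local matter on a neighbourhood where $\varphi\neq0$.

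The bulk of the work, and the main obstacle, is the explicit determination of the Levi-Civita connection of $g_{\affcon,\Phi,T}$ and the bookkeeping that separates each tensor equation into its $x_{2'}$-polynomial coefficients, tracking which component produces which constraint. The conceptual heart, however, is the ODE $\partial_1B+{}^\affcon\Gamma_{11}^1\,B=0$: its nowhere-vanishing solutions are precisely what force the symmetry and recurrence of $\rho^\affcon$, thereby tying the existence of a fiber-nonconstant conformal factor to the geometry of $(\Sigma,\affcon)$ and distinguishing case (ii) from case (i).
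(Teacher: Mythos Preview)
Your proposal is correct and follows essentially the same route as the paper: adapted coordinates, fiber--fiber components forcing $\varphi=\iota X+\phi\circ\pi$, the mixed components $\ce_{13},\ce_{14},\ce_{23},\ce_{24}$ pinning down $A=0$ and $\partial_2B=0$, the ODE $B'+{}^\affcon\Gamma_{11}^1B=0$ from the $x_{2'}$-free part of $\ce_{14}$, and then the split into $B\equiv0$ versus $B$ nowhere zero leading to Wong coordinates. Your ordering differs slightly (you read off $\partial_2A=0$ first and invoke the diagonal relation rather than naming $\ce_{13}$ directly, and you bypass the paper's intermediate step $\partial_{22}\psi=0$ from $\ce_{22}$ by taking the $x_{2'}$-coefficient of $\ce_{11}$ straight away), and your remark $\Delta\varphi=2\operatorname{tr}(\affcon X)$ is a clean observation the paper leaves implicit, but none of this changes the substance of the argument.
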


\begin{proof}
Let $(x^1,x^2)$ be local coordinates on $\Sigma$ so that $T\partial_1=\partial_2$, $T\partial_2=0$, and consider the induced coordinates $(x^1,x^2,x_{1'},x_{2'})$ on $T^*\Sigma$. Since $T$ is parallel, and we obtain directly from Equation \eqref{eq:nablaT} that 
\[
	{}^\affcon\Gamma_{12}^1=0,\quad
	{}^\affcon\Gamma_{12}^2={}^\affcon\Gamma_{11}^1,\quad
	{}^\affcon\Gamma_{22}^1=0,\quad
	{}^\affcon\Gamma_{22}^2=0\,.
\]
In order to analyze the conformally Einstein equation \eqref{eq:Conformally Einstein} consider the symmetric $(0,2)$-tensor field $\ce=2\operatorname{Hes}_\varphi +\varphi\,\rho- \frac{1}{4}\{2\Delta\varphi+\varphi\,\tau\}g$ and set $\ce=0$.
Let $\ce_{ij}=\ce(\partial_i,\partial_j)$ and let $\varphi\in\mathcal{C}^\infty(T^*\Sigma)$ be a solution of \eqref{eq:Conformally Einstein}. Then one computes
\[
\ce_{33}=2 \partial_{1'1'}\varphi,\quad
\ce_{34}=2 \partial_{1'2'}\varphi,\quad
\ce_{44}=2 \partial_{2'2'}\varphi\,,
\]
to show that any solution of \eqref{eq:Conformally Einstein} must be of the form \begin{equation}\label{eq:conformal function Walker}
\varphi(x^1,x^2,x_{1'},x_{2'})=
A(x^1,x^2) x_{1'}
+B(x^1,x^2)x_{2'}
+\psi(x^1,x^2)\,,
\end{equation}
for some smooth functions $A$, $B$ and $\psi$ depending only on the coordinates $(x^1,x^2)$.
This shows that any solution of the conformally Einstein equation on $(T^*\Sigma,g_{\affcon,\Phi,T})$ is of the form
$\varphi=\iota X+\psi\circ\pi$, where $\iota X$ is the evaluation of a vector field $X=A\partial_1+B\partial_2$ on $\Sigma$, $\psi\in\mathcal{C}^\infty(\Sigma)$ and $\pi:T^*\Sigma\rightarrow\Sigma$ is the projection. 

Now, the conformally Einstein condition given in Equation \eqref{eq:Conformally Einstein} can be expressed in matrix form as follows:
\begin{equation}\label{eq:ce}
(\ce_{ij})\!=\!\left(\!
\begin{array}{cccc}
		\ce_{11} & \ce_{12} 
		& 
		\partial_1A-\partial_2B 
		&
		2(
		{}^\affcon\Gamma_{11}^2 A
		+{}^\affcon\Gamma_{11}^1 B
		+\partial_1 B
		-A x_{2'}
		)
		\\
		\ast & \ce_{22} 
		& 
		2\partial_2A 
		&
		-\partial_1A+\partial_2B
	    \\
	    \ast & \ast & 0 & 0
		\\
		\ast & \ast & \ast & 0
\end{array}
\!\right) 
\end{equation}
where positions with $\ast$ are not written since the matrix is symmetric, and where
\medskip

\noindent$
\begin{array}{rcl}
\ce_{11}
&=& 
-(
   \partial_1A
   -\partial_2B
   -4{}^\affcon\Gamma_{11}^1 A
   )x_{2'}^2

\\[0.1in]

&+&
\{
   A \Phi_{22}
   +2(
      \partial_{11}A
      -{}^\affcon\Gamma_{11}^2\partial_2A
      +{}^\affcon\Gamma_{11}^1\partial_2B
      +A \partial_2{}^\affcon\Gamma_{11}^2 
      -B \partial_2{}^\affcon\Gamma_{11}^1 
   )
   \}x_{1'}

\\[0.1in]

&-&
\{
   B \Phi_{22}
   +2 A \Phi_{12}
   -2(
        \partial_{11}B
        +{}^\affcon\Gamma_{11}^2 \partial_1A
        -{}^\affcon\Gamma_{11}^1 \partial_1B
        
        \\[0.1in]
        &&
        
        +(\partial_1{}^\affcon\Gamma_{11}^2-2{}^\affcon\Gamma_{11}^1{}^\affcon\Gamma_{11}^2) A
        +(\partial_1{}^\affcon\Gamma_{11}^1-2({}^\affcon\Gamma_{11}^1)^2) B
        +\partial_2\psi
     )
   \}x_{2'}

\\[0.1in]

&+&
2 \partial_2A x_{1'}x_{2'}

\\[0.1in]

&-&
(
   \partial_1A+\partial_2B 
   )\Phi_{11}
+2(
   {}^\affcon\Gamma_{11}^2A+{}^\affcon\Gamma_{11}^1B
   )\Phi_{12}
+(
   2{}^\affcon\Gamma_{11}^2 B+\psi
   )\Phi_{22}

\\[0.1in]

&-&
A \partial_1\Phi_{11}
+B \partial_2\Phi_{11}
-2B \partial_1\Phi_{12}

\\[0.1in]

&+&
2\partial_{11}\psi
-2{}^\affcon\Gamma_{11}^1\partial_1\psi
-2{}^\affcon\Gamma_{11}^2\partial_2\psi
-2(\partial_1{}^\affcon\Gamma_{11}^1-\partial_2{}^\affcon\Gamma_{11}^2)\psi \,,

\end{array}
$

\bigskip

\noindent$
\begin{array}{rcl}
\ce_{12}
&=& 
2(
   \partial_{12}A
   -{}^\affcon\Gamma_{11}^1\partial_2A
   +A \partial_2{}^\affcon\Gamma_{11}^1 
   )x_{1'}
   
\\[0.1in]

&+&
2(
   \partial_{12}B
   +{}^\affcon\Gamma_{11}^1\partial_1A
   + A \partial_2{}^\affcon\Gamma_{11}^2
   )x_{2'}
   
\\[0.1in]

&-&
(\partial_1A+\partial_2B)\Phi_{12}
+2{}^\affcon\Gamma_{11}^1 B \Phi_{22}
-A \partial_2\Phi_{11}
-B \partial_1\Phi_{22}

\\[0.1in]

&+&
2\partial_{12}\psi
-2{}^\affcon\Gamma_{11}^1 \partial_2\psi \,,
\end{array}
$

\bigskip

\noindent$
\begin{array}{rcl}
\ce_{22}
&=& 
2\partial_{22}A x_{1'}
+2(
    \partial_{22}B+2 A \partial_{2}{}^\affcon\Gamma_{11}^1
    )x_{2'}

\\[0.1in]

&-&
(\partial_1A
   +\partial_2B
   +2{}^\affcon\Gamma_{11}^1 A
   )\Phi_{22}
-2A\partial_2\Phi_{12}
+A \partial_1\Phi_{22}
-B \partial_2\Phi_{22}
\\[0.1in]

&+&
2\partial_{22}\psi \,.
\end{array}
$

\bigskip
First, we use component 
$
\ce_{14}=2(
{}^\affcon\Gamma_{11}^2 A
+{}^\affcon\Gamma_{11}^1 B
+\partial_1 B
-A x_{2'}
)
$ in Equation~\eqref{eq:ce}; note that   $\partial_{2'}\ce_{14}=-2A$, and therefore 
$A(x^1,x^2)=0$,
which shows that $X\in\operatorname{ker}T$.
Now component $\ce_{13}$ in Equation~\eqref{eq:ce} gives $\partial_{2}B=0$, which implies  
$B(x^1,x^2)=P(x^1)$
for some smooth function $P$ depending only on the coordinate $x^1$, i.e., the vector field $X=B\partial_{2}$ satisfies $\operatorname{tr}(\affcon X)=0$.

At this point,  the conformal function $\varphi$ has  the coordinate expression 
\[
\varphi(x^1,x^2,x_{1'},x_{2'})=P(x^1) x_{2'}+\psi(x^1,x^2)
\]
and the possible non-zero components in  Equation~\eqref{eq:ce} are $\ce_{11}$, $\ce_{12}$, $\ce_{22}$ and $\ce_{14}$.
Considering the component $\ce_{14}=2(
P'(x^1)+{}^\affcon\Gamma_{11}^1(x^1,x^2) P(x^1))$, we distinguish two cases depending on whether the function $P$ vanishes identically or not.
Indeed, if $P(x^1)$ is a solution of the equation $\ce_{14}=0$, then $\partial_1\left(P(x^1)e^{\int{}^\affcon\Gamma_{11}^1(x^1,x^2) dx^1}\right)$ $=$ $e^{\int{}^\affcon\Gamma_{11}^1(x^1,x^2) dx^1}\left\{ P'(x^1)+P(x^1){}^\affcon\Gamma_{11}^1(x^1,x^2) \right\}$ $=$ $0$, which shows that 
$P(x^1)e^{\int{}^\affcon\Gamma_{11}^1(x^1,x^2) dx^1}=\mathcal{Q}(x^2)$ for some smooth function $\mathcal{Q}(x^2)$. Now, if the function $\mathcal{Q}(x^2)$ vanishes at some point, then $P(x^1)=0$ at each point. Otherwise, if $\mathcal{Q}(x^2)\neq 0$ at each point, so is $P(x^1)$.

 First, suppose that $P(x^1)\equiv 0$, and hence $\varphi=\psi\circ\pi$. In this case, component $\ce_{22}$ in  Equation~\eqref{eq:ce} yields $\partial_{22}\psi=0$, which implies 
$\psi(x^1,x^2)=Q(x^1) x^2+\phi(x^1)$
for some smooth functions $Q$ and $\phi$ depending only on the coordinate $x^1$. Now, the only components in   Equation~\eqref{eq:ce} which could be non-null are 
\[
\begin{array}{rcl}
\ce_{11}
&=& 
2Q x_{2'}
+(Q\Phi_{22}
  +2 Q''
  -2{}^\affcon\Gamma_{11}^1 Q'
  -2(\partial_1{}^\affcon\Gamma_{11}^1-\partial_2{}^\affcon\Gamma_{11}^2) Q
  )x_2
  
\\[0.1in]
&+&

\phi\Phi_{22}
+2\phi''
-2{}^\affcon\Gamma_{11}^1\phi'
-2(\partial_1{}^\affcon\Gamma_{11}^1-\partial_2{}^\affcon\Gamma_{11}^2)\phi
-2{}^\affcon\Gamma_{11}^2 Q\,,

\\[0.15in]

\ce_{12} &=& 2(Q'-{}^\affcon\Gamma_{11}^1 Q)\,.

\end{array}
\]
Now, $\partial_{2'}\ce_{11}=2 Q$, implies   $Q=0$, thus showing that $d\varphi(\operatorname{ker}T)=0$. Then $\ce_{12}=0$ and the component $\ce_{11}$ reduces to
\[
\ce_{11} = 
\phi\Phi_{22}
+2\phi''
-2{}^\affcon\Gamma_{11}^1\phi'
-2(\partial_1{}^\affcon\Gamma_{11}^1-\partial_2{}^\affcon\Gamma_{11}^2)\phi\,.
\]
Since $\varphi(x^1,x^2,x_{1'},x_{2'})= \phi(x^1)$, $\phi$ must be non-null and we obtain that $\ce_{11}=0$ is equivalent to
\[
\begin{array}{rcl}
\Phi_{22} &=&
-\tfrac{2}{\phi} \left\{
\phi''-{}^\affcon\Gamma_{11}^1\phi'
-(\partial_1{}^\affcon\Gamma_{11}^1-\partial_2{}^\affcon\Gamma_{11}^2)\, \phi
\right\}
\,,
\\
\noalign{\medskip}
&=&-\tfrac{2}{\phi}\left\{\operatorname{Hes}^\affcon_\phi(\partial_1,\partial_1)+\phi\,\rho^D_{sym}(\partial_1,\partial_1)\right\}\,,
\end{array}
\]
from where (i) is obtained.

\medskip

Finally, we analyze the case in which the function $P(x^1)$ does not vanish identically. 
Since  $\ce_{14}=2(P'(x^1)+{}^\affcon\Gamma_{11}^1(x^1,x^2) P(x^1))$, we have 
$\partial_2{}^\affcon\Gamma_{11}^1=0$.
Now it follows from Remark \ref{re:recurrente} that the Ricci tensor $\rho^\affcon$ is symmetric of rank one and recurrent. Specialize the local coordinates $(u^1,u^2)$ on $\Sigma$ so that the only non-zero Christoffel symbol of $\affcon$ is ${}^u\Gamma_{11}^2(u^1,u^2)$ and $T\partial_{u^1}=\partial_{u^2}$, $T\partial_{u^2}=0$.
Then any solution of the conformally Einstein equation takes the form
\[
\varphi(u^1,u^2,u_{1'},u_{2'})=\mathcal{A}(u^1) u_{2'}+\phi(u^1,u^2).
\]
Now, considering the component $\ce_{41}$ of the conformally Einstein equation in the new coordinates $(u^1,u^2)$, one has $\ce_{41}=2\mathcal{A}'(u^1)$, which shows that  
$\varphi(u^1,u^2,u_{1'},u_{2'})=\kappa u_{2'}+\phi(u^1,u^2)$ for some $\kappa\neq 0$.
Considering now the component
\[
\begin{array}{l}
\ce_{11}=
(2\partial_2\phi-\kappa\Phi_{22})u_{2^\prime} +2\partial_{11}\phi-2\partial_2\phi {}^u\Gamma_{11}^2
\\
\noalign{\medskip}
\phantom{\ce_{11}=}
+2\phi\partial_2 {}^u\Gamma_{11}^2+\phi\Phi_{22}+2\kappa\Phi_{22}{}^u\Gamma_{11}^2
+\kappa\partial_2\Phi_{11}-2\kappa\partial_1\Phi_{12}
\end{array}
\]
it follows that the conformally Einstein equation reduces to  
\[
\begin{array}{rcl}
\kappa\Phi_{22}&=&2\partial_2\phi, \\
(\phi+2\kappa {}^u\Gamma_{11}^2)\Phi_{22}&=&-2(\operatorname{Hes}_\phi^\affcon(\partial_{u^1},\partial_{u^1})+ \phi\rho^\affcon(\partial_{u^1},\partial_{u^1}))\\
\noalign{\medskip}
&&
+\kappa(2\partial_1\Phi_{12}-\partial_2\Phi_{11})\,,
\end{array}
\]
from where (ii) is obtained.
\end{proof}

\section{Examples}

\subsection{Nilpotent Riemannian extensions with flat base}
\label{re:flat-11}
Let $(\Sigma,\affcon)$ be a flat torsion free affine surface. Take local coordinates on $\Sigma$ so that all Christoffel symbols vanish. Let $T$ be a parallel nilpotent $(1,1)$-tensor field. Since $T$ is parallel, its components $T_i^j$ are necessarily constant on the given coordinates. Hence one may further specialize the local coordinates $(x^1,x^2)$, by using a linear transformation, so that  $T\partial_1=\partial_2$, $T\partial_2=0$ and all the Christoffel symbols ${}^\affcon\Gamma_{ij}^k$ remain identically zero.
Now Theorem \ref{th:Bach-zero} shows that 
\emph{$(T^\ast\Sigma, g_{\affcon,\Phi,T})$ is Bach-flat for any symmetric $(0,2)$-tensor field $\Phi$ on $\Sigma$}. Moreover it follows from Theorem \ref{th:solitons} that 
\emph{$(T^\ast\Sigma, g_{\affcon,\Phi,T},f=h\circ\pi)$ is a steady gradient Ricci soliton for any $h\in\mathcal{C}^\infty(\Sigma)$ with $dh\circ T=0$ and any symmetric $(0,2)$-tensor field $\Phi$ such that $\Phi_{22}(x^1,x^2)=-h''(x^1)$}.

Further note from Remark \ref{re:4-3} that the steady gradient Ricci soliton $(T^\ast\Sigma, g_{\affcon,\Phi,T}$, $f=h\circ\pi)$ satisfies $\mathbb{D}=0$. Moreover, since $\Phi_{22}=-h''(x^1)$, one has that $(T^\ast\Sigma, g_{\affcon,\Phi,T})$ is in the conformal class of an Einstein metric (just considering the conformal metric $\bar g=\phi^{-2}g_{\affcon,\Phi,T}$ determined by the equation
$\phi''(x^1)-\frac{1}{2}\phi(x^1)h''(x^1)=0$).

\begin{remark}\rm
Set $\Sigma=\mathbb{R}^2$ with usual coordinates $(x^1,x^2)$ and put $T\partial_1=\partial_2$, $T\partial_2=0$. For any  any smooth function $h(x^1)$ consider the deformation tensor $\Phi$ given by $\Phi_{22}(x^1,x^2)=-h''(x^1)$ (the other components being zero). 
Then, the non zero Christoffel symbols of $g_{\affcon,\Phi,T}$ are given by
$$
\Gamma_{11}^2=-x_{2'}=-\Gamma_{12'}^{1'}\,,\quad
\Gamma_{11}^{2'}=-h''(x^1)x_{2'}\,,\quad
\Gamma_{12}^{2'}=-\frac{1}{2}h^{(3)}(x^1)=-\Gamma_{22}^{1'}\,.
$$
Hence a curve $\gamma(t)=(x^1(t),x^2(t),x_{1'}(t),x_{2'}(t))$ is a geodesic if and only if
$$
\begin{array}{l}

\ddot{x}^1(t)=0\,,\qquad\qquad 
\ddot{{x}}^2(t)-\, x_{2'}(t)\,\dot{x}^1(t)^2=0\,,
\\
\noalign{\medskip}
\ddot{x}_{1'}(t)+2\, x_{2'}(t)\,\dot{x}^1(t)\dot{x}_{2'}(t)+\frac{1}{2}\,h^{(3)}(x^1(t))\,\dot{x}^2(t)^2=0\,,
\\
\noalign{\medskip}
\ddot{x}_{2'}(t)-h''(x^1(t))\, x_{2'}(t)\,\dot{x}^1(t)^2-\,h^{(3)}(x^1(t))\,\dot{x}^1(t)\,\dot{x}^2(t)=0\,.
\end{array}
$$
Thus ${x}^1(t)=at+b$ for some $a,b\in\mathbb{R}$ and
$$
\begin{array}{l}
\ddot{{x}}^2(t)-\,a^2\, x_{2'}(t)=0\,,
\\
\noalign{\medskip}
\ddot{x}_{2'}(t)-h''(at+b)\,a^2\, x_{2'}(t)-\,h^{(3)}(at+b)\,a\,\dot{x}^2(t)=0\,.
\\
\noalign{\medskip}
\ddot{x}_{1'}(t)+2a\, x_{2'}(t)\,\dot{x}_{2'}(t)+\frac{1}{2}\,h^{(3)}(at+b)\,\dot{x}^2(t)^2=0\,,

\end{array}
$$
Now the first two equations above are linear and thus $x^2(t)$ and $x_{2'}(t)$ are globally defined. 
Finally, since 
$\ddot{x}_{1'}(t)+2a\, x_{2'}(t)\,\dot{x}_{2'}(t)+\frac{1}{2}\,h^{(3)}(at+b)\,\dot{x}^2(t)^2=0$ is also linear on ${x}_{1'}(t)$, one has that geodesics are globally defined. 

Then it follows from Theorem \ref{th:solitons}  that 
$(T^\ast\mathbb{R}^2, g_{\affcon,\Phi,T},f=h\circ\pi)$ is a geodesically complete steady gradient Ricci soliton, which is conformally Einstein by Theorem \ref{th:conformally-Einstein}.

%anti-self-dual if and only if $h^{(4)}(x^1)-\frac{1}{2}h''(x^1)^2=0$ by Theorem \ref{th:sd-asd}.
\end{remark}

%{\red Teorema de Wylie sobre transformaciones conformes de quasi-Einstein.
%
%Las funciones de Weierstrass no estan definidas en todo el plano. Sin embargo, fijando $h''(x^1)=\operatorname{const}$, las soluciones son polinomios de grado cuatro globalmente definidos}

\subsection{Nilpotent Riemannian extensions with non recurrent base}

\label{re:solitones-no-CE} 
Let $(T^*\Sigma,g_{\affcon,\Phi,T},f=h\circ\pi)$ be a non-trivial Bach-flat steady gradient Ricci soliton as in Theorem \ref{th:solitons}. Further assume that the Ricci tensor $\rho^\affcon$ is non symmetric, i.e.,  $\rho^\affcon_{sk}\neq 0$ (equivalently $\partial_2{}^\affcon\Gamma_{11}^1\neq 0$ as shown in the proof of Proposition \ref{prop:1}). Then it follows from Theorem \ref{th:sd-asd} that $(T^*\Sigma,g_{\affcon,\Phi,T})$ is not half conformally flat. 

Theorem \ref{th:conformally-Einstein} shows that $(T^*\Sigma,g_{\affcon,\Phi,T})$ is conformally Einstein if and only if there exists a positive $\phi\in\mathcal{C}^\infty(\Sigma)$ with $d\phi\circ T =0$ such that
$\phi\,\widehat{\Phi}+2(\operatorname{Hes}^\affcon_\phi+\phi\,\rho^\affcon_{sym})=0$. 
Hence it follows from Theorem \ref{th:solitons} that $\operatorname{Hes}^\affcon_h=\frac{2}{\phi}\operatorname{Hes}^\affcon_\phi$, which means
$(2\frac{\phi'}{\phi}-h'){}^\affcon\Gamma_{11}^1=2\frac{\phi''}{\phi}-h''$.
Taking derivatives with respect to $x^2$ and, since $\partial_2{}^\affcon\Gamma_{11}^1\neq 0$, the equation above splits into
$$
\frac{2\phi'}{\phi}-h'=0\,,\qquad  \mbox{and}\qquad 2\frac{\phi''}{\phi}-h''=0\,,
$$ 
which only admits constant solutions. Summarizing the above one has the following:
\emph{Let $(\Sigma,\affcon,T)$ be an affine surface with non-symmetric Ricci tensor (i.e., $\rho^\affcon_{sk}\neq 0$). Then any Bach-flat gradient Ricci soliton $(T^*\Sigma,g_{\affcon,\Phi,T},f=h\circ\pi)$ is neither half conformally flat nor conformally Einstein.}

\subsection*{Acknoledgments}
It is a pleasure to acknowledge useful conversations on this subject with Professor P. Gilkey. 

Research partially supported by projects GRC2013-045, MTM2013-41335-P, MTM2016-75897-P  and EM2014/009 with FEDER funds (Spain).

\end{document}